\renewenvironment{proof}[1][\proofname]{{\bfseries #1.}}{\qed}
\newtheorem{thm}{Theorem}[section]
\newtheorem{lemma}{Lemma}[section]
\theoremstyle{remark}
\newtheorem{ass}{Assumption}
\theoremstyle{definition}
\newtheorem{remark}{Remark}[section]
\newcounter{step}
\newenvironment{steps}
{\begin{list}{\it{Step\,\arabic{step}} :}
{\usecounter{step}}}
{\end{list}}
\newcommand*{\rom}[1]{\expandafter{\romannumeral #1\relax}}
\newcommand{\E}{\mathbb{E}}
\newcommand{\Var}{\mathrm{Var}}
\newcommand{\C}{\mathcal{C}}
\renewcommand{\baselinestretch} {1.2}
\makeatletter \setcounter{page}{1}
\def\singlespace{\def\baselinestretch{1}\@normalsize}
\title{{\sc Testing  equality of autocovariance operators for functional time series}}
\author{ {Dimitrios ~{\sc PILAVAKIS}\footnote{
          Email: \texttt{pilavakis.dimitrios@ucy.ac.cy}}}, \; {Efstathios ~{\sc PAPARODITIS}\footnote{
          Email: \texttt{stathisp@ucy.ac.cy}}} \; and \;  {Theofanis ~{\sc SAPATINAS}\footnote{Email: \texttt{fanis@ucy.ac.cy}~ (Corresponding author)}} \\
          Department of Mathematics and Statistics, University of Cyprus, \\
          P.O. Box 20537, CY 1678 Nicosia, CYPRUS.
}
\date{}
\begin{document}
\maketitle

\begin{abstract}
We consider strictly stationary stochastic processes of Hilbert space-valued random variables and focus on 
 fully functional tests for the equality of the lag-zero autocovariance operators of several independent functional time series.
A  moving block bootstrap-based testing procedure is proposed  which  generates pseudo random elements  that satisfy the null hypothesis of interest.  It is  based on directly bootstrapping  the time series of tensor products  which overcomes  
some  common difficulties associated  with applications of the bootstrap to related  testing problems. The suggested methodology  can  be potentially applied to a broad range of test statistics of the hypotheses of interest. As an example, we establish validity for  approximating the distribution under the null of  a  test statistic
based on the Hilbert-Schmidt distance of the corresponding sample lag-zero autocovariance operators, and show consistency under the alternative. As a prerequisite, we prove a central limit theorem for the moving block bootstrap procedure applied to the sample autocovariance operator which is of interest on its own.  The finite sample size and power performance of the suggested moving block bootstrap-based testing procedure is illustrated through simulations  and an application to a real-life dataset is discussed.

\medskip
\noindent
{\em Some key words:} {\sc
 Autocovariance Operator;
 Functional Time Series;
 Hypothesis Testing;
 Moving Block Bootstrap.}

\end{abstract}

\section{\sc Introduction}
Functional data analysis deals with random variables which are curves or images and can be expressed as functions in appropriate spaces. In this paper, we consider functional time series $\mathds{X}_n=\{ X_1,X_2,\ldots,X_n\}$ steming from a strictly stationary stochastic process $\mathds{X}=(X_t,\,t\in \mathbb{Z})$ of Hilbert space-valued random functions  $ X_t(\tau),\,\tau\in \mathcal{I}$ (where $\mathcal{I}$ is a compact interval on $\mathbb{R}$), which are assumed to be  $L^4$-$m$-approximable, a dependence assumption which is satisfied by large classes of  commonly used functional time series models; see, e.g., H\"{o}rmann and  Kokoszka (2010).  We would like to infer properties of a group of  $K$ independent functional  processes 
based on  observed stretches  from each group. In particular, we focus on  the problem of testing whether the lag-zero autocovariance operators of   the $K$  processes are equal  and consider  fully functional  test statistics which evaluate the difference between the corresponding sample lag-zero autocovariance operators using appropriate distance measures.

\medskip

As it is common in the statistical analysis of functional data,  the limiting distribution of such statistics depends, in a complicated way, on difficult to estimate characteristics of the underlying functional stochastic  processes like,  for instance, its entire fourth order temporal dependence structure.  
Therefore, and in order to implement the  testing approach proposed,
 we apply   a moving block bootstrap (MBB) procedure which is used to estimate the distribution of the test statistic of interest under the null. 
 Notice that for testing problems related to the  equality of second order characteristics of several independent groups, 
  in the finite or infinite dimensional setting, applications of the bootstrap to approximate the distribution of  a test statistic of interest are  commonly based on  the generation of  pseudo random observations obtained by  resampling from the pooled (mixed) sample consisting of all available observations. Such implementations  lead to  the problem that the generated 
  pseudo observations  have not only identical second order characteristics but also identical distributions. This may affect the power and the  conditions needed for bootstrap consistency in that it may restrict its validity to specific situations only;
  see  Lele and Carlstein (1990) for an overview for the case of independent and identically distributed (i.i.d.) real-valued random variables   and Remark~\ref{rem:MBB} in Section 3 below for more details in the functional setting. 
  
  \medskip 
  
  To overcome such problems, we use a different  approach which is based on the observation that the 
  lag-zero autocovariance operator $\mathcal{C}_0=\E(X_t-\mu)\otimes (X_{t}-\mu)$ is the expected value of the tensor product process $ \{{\mathcal Y}_t=(X_t-\mu)\otimes(X_{t}-\mu)$, $t\in \mathbb{Z}$\},  where $\mu=\E X_t$ denotes the expectation of $X_t$.  Therefore, the  testing problem of interest can also be viewed as testing for the equality of expected values (mean functions) of the associated  processes of tensor products.  The  suggested MBB procedure works by first generating  functional pseudo random elements  via resampling from the time series of tensor products of 
  the same group and then 
  adjusting  the mean function of the generated pseudo random elements in  each group so that the null hypothesis of interest is satisfied.
We stress here the fact  that the proposed method is not designed having  any particular test statistic in mind and it is, therefore, potentially  applicable to a wide range of different test statistics.  As an example,  we establish in this paper validity of the proposed MBB-based testing procedure in estimating  the distribution of a particular fully functional test statistic under the null,  which is based on the Hilbert-Schmidt norm between the sample lag-zero autocovariance operators, and show its consistency under the alternative. By fully functionals tests, we mean tests which exploit the entire infinite dimensionality structure of the underlying stochastic process and do not attempt to reduce dimensionality by projecting on finite dimensional subspaces. The idea of block bootstrapping from blocks is not new and have been previously investigated by  K\"{u}nsch (1989) for a fixed number of blocks and by  Politis and Romano (1992)  in a more general context where  the number of blocks is allowed to increase to infinity with the sample size $n$.   Furthermore, by considering the aforementioned tensor products, the problem of testing for differences in the autocovariance operators becomes similar to the functional ANOVA problem; see  Cuevas {\em et al.} (2004),  Zhang (2013),  Horv\'ath and Rice (2015) and H\"ormann {\em et al.} (2018). 

\medskip

As a prerequisite, to our theoretical derivations, we first prove a central limit theorem for the MBB procedure applied to the sample version of the autocovariance operator $\mathcal{C}_h=\E(X_t-\mu)\otimes (X_{t+h}-\mu)$, $h \in \mathbb{Z}$, of an $L^4$-$m$-approximable stochastic process, which is of interest on its own. Our  results imply  that the suggested MBB-based testing procedure is not restricted  to the case of testing for equality of  the lag-zero autocovariance operator only but it can be adapted to tests dealing  with the equality of  any (finite number of)  autocovariance operators $ \mathcal{C}_h$  for  lags  $h $  different from zero. 

\medskip
Asymptotic and bootstrap based inference procedures for covariance operators for two or more populations of i.i.d.  functional data have been extensively discussed in the literature; see, e.g., Panaretos {\em et al.} (2010), Fremdt {\em et al.} (2013)  for tests based on finite-dimensional projections, Pigoli {\em et al.} (2014)  for permutation tests  based on distance measures and Paparoditis and Sapatinas (2016) for fully  functional tests. 
Notice that testing for the equality of the lag-zero autocovariance operators  is an important problem for functional time series since  the  associated covariance kernel 
$ c_0(u,v)= Cov(X_t(u), X_t(v)) $ of the lag-zero autocovariance operator  $\mathcal{C}_0$ describes,  for $ (u,v) \in \mathcal{I}\times \mathcal{I}$, the entire covariance structure  of the   random function  $ X_t$.
  Despite its importance,  this testing 
  problem has been considered, to the best of our knowledge, only recently by Zhang and Shao (2015). 
 To tackle the aforementioned problems associated  with the  implementability of  limiting distributions, Zhang and Shao (2015) considered tests based on  projections on finite dimensional spaces of the differences of the estimated  lag-zero autocovariance operators. Notice that similar directional tests have previously been considered for i.i.d. functional data; see   Panaretos {\em et. al.} (2010) and Fremdt {\em et al.} (2013). Although projection-based tests  have the advantage that they  lead to manageable limiting distributions, and can be powerful when  the deviations from the null are captured  by the finite-dimensional space projected, such tests  have no power  for  alternatives which are orthogonal to the projection space.  Moreover, and apart from being free from the choice of testing parameters, like the choice of the dimension of the projection space, and from being consistent for a broader class of alternatives, the fully functional tests considered in this paper also allow for a nice interpretation of the test results obtained; we refer to Section 4 for an example.  

\medskip
The paper is organised as follows. In Section~\ref{sec:notations}, the basic assumptions on the underlying stochastic process $\mathds{X}$ are stated and the asymptotic validity of the MBB procedure applied to  estimate the distribution of the sample autocovariance operator is established. In Section~\ref{sec:algorithm}, the proposed MBB-based procedure for testing equality of the lag-zero autocovariance operators for several independent functional time series is introduced. Theoretical justifications for approximating the null distribution of a particular fully functional test statistic are given and consistency under the alternative is obtained. Numerical simulations are presented in Section~\ref{sec:numerical} in which the finite sample behaviour of the proposed MBB-based testing methodology is investigated. A Cyprus daily temperature data example is also discussed in this section. Auxiliary results and proofs of the main results are deferred to Section~\ref{sec:proofs} and to the supplementary material.

\section{\sc Bootstrapping the autocovariance operator}
\label{sec:notations}
\subsection{\sc Preliminaries and Assumptions}
We consider a strictly stationary stochastic process $\mathds{X}=\{X_t,\,t\in \mathbb{Z}\},$ where the random variables $X_t$ are random functions $ X_t(\omega, \tau),\,\tau\in \mathcal{I},\,\omega\in\Omega,\,t\in\mathbb{Z}$, defined on a probability space $(\Omega,A, P)$ and take values in the separable Hilbert space of squared-integrable $\mathbb{R}$-valued functions on $\mathcal{I}$, denoted by $L^2(\mathcal{I}).$ The expectation function of $X_t$, $\E X_t\in L^2(\mathcal{I}),$ is independent of $t,$ and it is denoted by $\mu.$
We define $\langle f, g\rangle =\int_{\mathcal{I}}f(\tau)g(\tau)\mathrm{d}\tau,$ $\|f\|^2=\langle f,f\rangle$ and the tensor product between $f$ and $g$ by $f\otimes g(\cdot)=\langle f, \cdot\rangle g.$ For two Hilbert Schmidt operators $\Psi_1$ and $\Psi_2,$ we denote by $\langle \Psi_1,\Psi_2\rangle_{HS} =\sum_{i=1}^{\infty}\langle\Psi_1(e_i),\Psi_2(e_i)\rangle$ the inner product which generates the Hilbert Schmidt norm $\|\Psi_1\|_{HS}=\sum_{i=1}^{\infty}\|\Psi_1(e_i)\|^2$, where $\{e_i,i=1,2,\ldots\}$ is any orthonormal basis of $L^2(\mathcal{I}).$ If $\Psi_1$ and $\Psi_2$ are Hilbert Schmidt integral operators with kernels $\psi_1(u,v)$ and $\psi_2(u,v),$ respectively, then $\langle \Psi_1,\Psi_2\rangle_{HS}=\int_{\mathcal{I}}\int_{\mathcal{I}}\psi_1(u,v)\psi_2(u,v)\mathrm{d}u\mathrm{d}v.$ We also define the tensor product between the operators $\Psi_1$ and $\Psi_2$ analogous to the tensor product of two functions, i.e., $\Psi_1\otimes \Psi_2(\cdot)=\langle \Psi_1, \cdot\rangle_{HS} \Psi_2.$ Note that $\Psi_1\otimes \Psi_2$ is an operator acting on the space of Hilbert Schmidt operators.
Without loss of generality, we assume that $\mathcal{I} = [0, 1]$ (the unit interval) and, for simplicity, integral signs without the limits of integration imply integration over the interval $\mathcal{I}.$ We finally write $L^2$ instead of $L^2(\mathcal{I})$, for simplicity. For more details, we refer to Horv\'{a}th and Kokoszka (2012, Chapter 2).

\medskip
To describe more precisely the dependence  structure of the stochastic process $\mathds{X}$, we use the notion of $L^p$-$m$-approximability; see H\"{o}rmann and  Kokoszka (2010). A stochastic process $\mathds{X}=\{X_t,t\in\mathbb{Z}\}$ with $X_t$ taking values in $L^2$, is called $L^4$-$m$-approximable if the following conditions are satisfied:
\begin{enumerate}[label=(\roman*)]
     \item $X_t$ admits the representation \begin{equation}\label{orismosL2a}X_t=f(\delta_t,\delta_{t-1},\delta_{t-2},\ldots)\end{equation}  for some measurable function $f:S^\infty\rightarrow L^2$, where $\{\delta_t,\,t\in\mathbb{Z}\}$ is a sequence of i.i.d. elements in $L^2$.
\item $\E\|X_0\|^4<\infty$ and
    \begin{equation}\label{orismosL2b}
    \sum_{m\geq1}\left(\E\|X_t-X_{t,m}\|^4\right)^{1/4}<\infty,\end{equation} where $X_{t,m}=f(\delta_t,\delta_{t-1},\ldots,\delta_{t-m+1},\delta_{t,t-m}^{(m)},\delta_{t,t-m-1}^{(m)},\ldots)$
    and, for each $t$ and $k,$ $\delta_{t,k}^{(m)}$ is an independent copy of $\delta_t.$
\end{enumerate}
The rational behind this concept of weak dependence is that the 
 function $f$ in~\eqref{orismosL2a}  is such   that the effect of the innovations $\delta_i$ far back in the past becomes negligible, that is, these innovations can be replaced by other, independent, innovations. For the stochastic process $\mathds{X}$ considered in this paper, we somehow strengthen \eqref{orismosL2b} to the following assumption.
\begin{ass} \label{as:strongL4}
$\mathds{X}$ is $L^4$-$m$-approximable and satisfies
$$\lim_{m\to\infty}m\left({\E\|X_t-X_{t,m}\|^4}\right)^{1/4}=0.$$
\end{ass}

\medskip
\noindent
Since $\E\|X_t\|^2<\infty$,  the autocovariance operator at lag $h \in \mathbb{Z}$ exists and is defined by 
$$
\mathcal{C}_h=\E[(X_t-\mu)\otimes (X_{t+h}-\mu)].
$$
Having an observed stretch $X_1,X_2,\ldots,X_n$, the operator $ \mathcal{C}_h$  is 
commonly estimated by the corresponding sample autocovariance operator, which is given by

\begin{equation*}
 \mathcal{\widehat{C}}_{h}=
 \begin{cases}
      n^{-1}\sum_{t=1}^{n-h}( X_t-\overline{X}_n)\otimes (X_{t+h}-\overline{X}_n), & \text{if}\ 0\leq h<n, \\
      n^{-1}\sum_{t=1}^{n+h}( X_{t-h}-\overline{X}_n)\otimes (X_{t}-\overline{X}_n), & \text{if}\ -n<h<0 ,\\
      0, & \text{otherwise,}
    \end{cases}
  \end{equation*}
where $\overline{X}_n=(1/n)\sum_{t=1}^{n}X_t$ is the sample mean function.  The limiting distribution of $ \sqrt{n}\big(\mathcal{\widehat{C}}_{h} - \mathcal{C}_h\big)$  can be derived using the same arguments to those applied in Kokoszka na Reimherr (2013) to investigate the limiting distribution of $ \sqrt{n}\big(\mathcal{\widehat{C}}_{0} - \mathcal{C}_0\big)$.
More precisely, it can  be shown that, for any (fixed) lag $h,$ $h \in \mathbb{Z}$, under  $L^4$-approximability conditions,
  $  \sqrt{n}\big(\mathcal{\widehat{C}}_{h} - \mathcal{C}_h\big) \Rightarrow  \mathcal{Z}_h$, where $\mathcal{Z}_h$ is a Gaussian Hilbert-Schmidt operator   
  with covariance operator $\Gamma_h$ given by  
  $$
\Gamma_h =
\sum_{s=-\infty}^{\infty}\E[((X_{1}-\mu)\otimes (X_{1+h}-\mu)-\mathcal{C}_h)\otimes ((X_{1+s}-\mu)\otimes (X_{1+h+s}-\mu)-\mathcal{C}_{h})];
$$
see also 
Mas (2002) for a related   result if  $\mathds{X}$ is a Hilbertian linear processes.

\subsection{\sc A Bootstrap CLT  for the empirical autocovariance operator}

In this section, we formulate and prove consistency of the MBB  for estimating the distribution of   
$\sqrt{n}\big(\mathcal{\widehat{C}}_{h} - \mathcal{C}_h\big) $  for any (fixed) lag $h,$ $h \in \mathbb{Z}$, in the case of weakly dependent Hilbert space-valued random variables satisfying the $ L^4$-approximability condition stated in Assumption 1. The MBB procedure 
was originally proposed for real-valued time series by K\"{u}nsch $(1989)$ and Liu and Singh $(1992)$.  
Adopted to the functional set-up,  this resampling procedure 
 first divides the functional time series at hand   into the  collection of all possible overlapping blocks of functions of length $b$. That is, 
  the first block consists of the functional observations 1 to $b$, the second block consists of the functional observations 2 to $b+1$, and so on. Then,  a bootstrap sample is  obtained by independent sampling, with replacement, from these blocks of functions and joining the blocks together in the order selected to form a new set of functional pseudo observations.
  
 However, to deal with  the problem of  estimating  the distribution of the sample autocovariance operator  $\mathcal{\widehat{C}}_{h}$,  we modify the above basic idea and apply the MBB directly to the set of random elements $\mathds{Y}_{n-h} =\{\mathcal{\widehat{Y}}_{t,h}, \, t=1,2,\ldots,n-h \}$, where $\mathcal{\widehat{Y}}_{t,h}=(X_{t}-\overline{X}_n)\otimes(X_{t+h}-\overline{X}_n)$.
  As mentioned in the Introduction, this has certain advantages in the testing context which will be discussed  in the next section. 
The  MBB procedure  applied to generate  the pseudo random elements 
$\mathcal{Y}_{1,h}^*,\mathcal{Y}_{2,h}^*,\ldots,\mathcal{Y}_{n-h,h}^*$
 is described by  the following steps.
\begin{steps}
  \item Let $b=b(n),1\leq b<n-h$, be an integer and denote by $B_t = \{\mathcal{\widehat{Y}}_{t,h},\mathcal{\widehat{Y}}_{t+1,h},\ldots,\mathcal{\widehat{Y}}_{t+b-1,h}\}$ the block of length $b$ starting from the tensor operator $\mathcal{\widehat{Y}}_t,$ where $ t=1,2,\ldots,N$  and  $N=n-h-b+1$ is the total number of such blocks available.
  \item Let $k$ be a positive integer satisfying $b(k-1)<n-h$ and $bk\geq n-h$ and define $k$ i.i.d. integer-valued random variables $I_1, I_2,\ldots, I_k$ selected from a discrete uniform distribution which assigns probability $1/N$ to each element of the set $\{1,2,\ldots,N\}$.
  \item Let $B_i^*=B_{I_i},\,i=1,2,\ldots,k$, and denote by
$\{\mathcal{Y}^*_{(i-1)b+1,h},\mathcal{Y}^*_{(i-1)b+2,h},\ldots,\mathcal{Y}^*_{ib,h}\}$ the elements of $B_i^*.$ Join the $k$ blocks in the order $B_1^*,B_2^*,\ldots,B_k^*$  together to obtain a new set of functional pseudo observations. The MBB generated sample of pseudo 
random elements  consists then of the set  $\mathcal{Y}_{1,h}^*,\mathcal{Y}_{2,h}^*,\ldots,\mathcal{Y}_{n-h,h}^*.$
\end{steps}
Note  that if we are  interested in the distribution of the sample autocovariance operator $\mathcal{\widehat{C}}_{h}$ for  some (fixed) lag $h$, $-n<h<0$, then the above algorithm can be applied to the time series of operators $\mathds{Y}_{n+h}=\{\mathcal{\widehat{Y}}_{t,h},\,t=h+1,h+2,\ldots,n\}$, where $\mathcal{\widehat{Y}}_{t,h}=(X_{t-h}-\overline{X}_n)\otimes(X_{t}-\overline{X}_n),\,t=h+1,h+2,\ldots,n,$ with minor changes. Hence, below, we only focus on the case of $0\leq h < n$.

Given a stretch $\mathcal{Y}_{1,h}^*,\mathcal{Y}_{2,h}^*,\ldots,\mathcal{Y}_{n-h,h}^*$   of pseudo random elements 
 generated by the above MBB procedure,  a bootstrap estimator of the autocovariance operator is given by the sample mean 
$$
\mathcal{\widehat{C}}_{h}^*=\frac{1}{n}\sum_{t=1}^{n-h}\mathcal{Y}^*_{t,h}.
$$
The  proposal is then  to estimate the distribution of $\sqrt{n}(\mathcal{\widehat{C}}_{h}-\mathcal{C}_h)$ by the distribution of the  bootstrap analogue
 $\sqrt{n}(\mathcal{\widehat{C}}_{h}^*-\E^*(\mathcal{\widehat{C}}_{h}^*))$, where $\E^*(\mathcal{\widehat{C}}_{h}^*)$ is (conditionally on $\mathds{X}_n$) the expected value of $\mathcal{\widehat{C}}_{h}^*.$  Assuming, for simplicity, that $n-h=kb,$ straightforward calculations yield
\begin{equation} \label{eq.mean-MBB-star}
\E^*(\mathcal{\widehat{C}}_{h}^*)=\dfrac{1}{N}\dfrac{n-h}{n}\left[\sum_{t=1}^{n-h}\mathcal{\widehat Y}_{t,h}-\sum_{j=1}^{b-1}\Big(1-\frac{j}{b}\Big)(\mathcal{\widehat Y}_{j,h}+\mathcal{\widehat Y}_{n-h-j+1,h})\right].
\end{equation}

The following theorem establishes validity of the MBB procedure suggested 
for approximating the distribution of  $\sqrt{n}(\hat{C}_{h}-C_h)$.

\begin{thm}
\label{thm:CLTmbb}
Suppose that the stochastic process $\mathds{X}$ satisfies Assumption~\ref{as:strongL4}. For $0\leq h<n,$ let $\mathcal{Y}_{1,h}^*,\mathcal{Y}_{2,h}^*,\ldots,\mathcal{Y}_{n-h,h}^*$ be a stretch of functional 
pseudo random elements  generated as in  Steps 1-3 of  the MBB procedure and assume that the block size $b=b(n)$ satisfies $b^{-1}+bn^{-1/3}=o(1)$ as $n\to\infty.$ Then,
as $n \to \infty,$
$$
 d(\mathcal{L}(\sqrt{n}(\mathcal{\widehat C}_{h}^*-\E^*(\mathcal{\widehat C}_{h}^*)) \mid\mathds{X}_n),\;\mathcal{L}(\sqrt{n}(\mathcal{\widehat C}_{h}-\mathcal{C}_h)))\to 0,
\quad\text{in probability,}
$$
where $d$ is any metric metrizing weak convergence on the space of Hilbert-Schmidt operators acting on $L^2$ and $\mathcal{L}(Z)$ denotes the law of the random element $Z$ belonging to this operator space.
\end{thm}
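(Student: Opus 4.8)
The plan is to establish the stronger statement that, conditionally on $\mathds{X}_n$ and in probability, $\sqrt n(\widehat{\mathcal C}_h^*-\E^*(\widehat{\mathcal C}_h^*))$ converges weakly to the Gaussian operator $\mathcal Z_h$ having covariance $\Gamma_h$. Since the excerpt already records $\sqrt n(\widehat{\mathcal C}_h-\mathcal C_h)\Rightarrow\mathcal Z_h$ and $d$ metrizes weak convergence, the claimed $d\to0$ then follows from the triangle inequality. The entry point is the remark that, writing $S_t=\sum_{j=0}^{b-1}\widehat{\mathcal Y}_{t+j,h}$ for the block sums and using $n-h=kb$, the concatenation in Step~3 gives $\widehat{\mathcal C}_h^*=\frac1n\sum_{i=1}^k S_{I_i}$, whence
$$
\sqrt n\big(\widehat{\mathcal C}_h^*-\E^*(\widehat{\mathcal C}_h^*)\big)=\frac1{\sqrt n}\sum_{i=1}^k\big(S_{I_i}-\E^*S_{I_1}\big).
$$
As $I_1,\dots,I_k$ are i.i.d., conditionally on $\mathds{X}_n$ the right-hand side is a normalized sum of $k$ independent, identically distributed, centred summands taking values in the separable Hilbert space of Hilbert--Schmidt operators on $L^2$, and the task reduces to a central limit theorem for a triangular array of i.i.d.\ elements in that space.

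For such an array it suffices to verify, in probability with respect to $\mathds{X}_n$, convergence of the covariance operator and a Lindeberg condition. The conditional covariance operator equals $\frac kn\,\frac1N\sum_{t=1}^N(S_t-\overline S)\otimes(S_t-\overline S)$ with $\overline S=\frac1N\sum_{t=1}^N S_t$, and since $k/n\sim 1/b$ this is exactly the moving-block estimator of the long-run covariance of the tensor-product process $\{\mathcal Y_{t,h}\}$, where $\mathcal Y_{t,h}=(X_t-\mu)\otimes(X_{t+h}-\mu)$. I would first note that $L^4$-$m$-approximability of $\mathds{X}$ transfers to $L^2$-$m$-approximability, in Hilbert--Schmidt norm, of the centred process $\{\mathcal Y_{t,h}-\mathcal C_h\}$, so that its autocovariances are summable and $\Gamma_h$ is a well-defined trace-class operator. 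Replacing $\overline X_n$ by $\mu$ inside each $\widehat{\mathcal Y}_{t,h}$ produces an error that is asymptotically negligible by the $O_P(n^{-1/2})$ rate of $\overline X_n-\mu$ and the available second moments, after which the estimator's bias, controlled by the truncation of the autocovariance series at lag $b$, vanishes because $b\to\infty$. Controlling the estimator's stochastic fluctuation is more delicate, since $\E\|X_0\|^4<\infty$ furnishes only $\E\|\mathcal Y_{t,h}\|_{HS}^2<\infty$ and no higher moment of the tensor products; I would handle it by a truncation argument that leans on the strengthened rate $m(\E\|X_t-X_{t,m}\|^4)^{1/4}\to0$ of Assumption~\ref{as:strongL4} together with the growth restriction $b=o(n^{1/3})$, so that $b/n\to0$ at a rate sufficient to kill the fluctuation in probability.

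The Lindeberg condition is verified in the same spirit and is the most delicate point, precisely because a single centred block sum $S_t-\overline S$ has Hilbert--Schmidt norm that can be of order up to $b$ while only two moments of the summands are available. I would split each $\widehat{\mathcal Y}_{t,h}$ into a bounded part and a tail, check the Lindeberg ratio for the bounded part, which possesses moments of every order, directly, and bound the tail contribution in $L^2$, again consuming the rate in Assumption~\ref{as:strongL4} and the condition $b=o(n^{1/3})$, the cube-root bound ensuring that $b=o(\sqrt n)$ and hence that no individual block is of the order of the normalization $\sqrt n$. With covariance convergence and the Lindeberg condition secured, the central limit theorem for i.i.d.\ triangular arrays in Hilbert space yields the desired conditional weak convergence to $\mathcal Z_h$; the trace-norm convergence of the covariance operators supplies the tightness needed in the infinite-dimensional setting. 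To upgrade the two ``in probability'' hypotheses to genuine conditional weak convergence I would pass to subsequences, extracting from every subsequence a further one along which both hold almost surely and applying the ordinary Hilbert-space central limit theorem with the data then fixed, or equivalently show that the conditional characteristic functional evaluated at each fixed $\Psi$ converges in probability to that of $\mathcal Z_h$ and combine this with tightness.

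The main obstacle is the middle pair of estimates: the trace-norm consistency of the moving-block long-run-covariance estimator and the verification of the Lindeberg condition, both carried out under the limited $L^4$ moment hypothesis on $\mathds{X}$ and in a genuinely infinite-dimensional operator space, where securing tightness demands control of the covariance operator in trace norm rather than merely weakly, and where the combination of Assumption~\ref{as:strongL4} and the block growth $b=o(n^{1/3})$ is exactly what is consumed. By contrast, the reduction to i.i.d.\ blocks and the concluding triangle-inequality argument are routine once these estimates are in hand.
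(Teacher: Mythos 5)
Your proposal is correct in outline and shares the skeleton of the paper's proof: reduce, via the triangle inequality and the known convergence $\sqrt{n}(\widehat{\mathcal{C}}_h-\mathcal{C}_h)\Rightarrow\mathcal{Z}_h$ (Theorem 3 of Kokoszka and Reimherr, 2013), to a conditional CLT for the normalized sum of the $k$ i.i.d.\ bootstrap block sums of centred tensor products (the replacement of $\overline{X}_n$ by $\mu$ costs $O_P(n^{-1/2})$ exactly as you say, after the WLOG reduction $\mu=0$, which is legitimate since $\widehat{\mathcal{Y}}_{t,h}$ is shift-invariant); identify the conditional second-moment structure with a Bartlett lag-window estimator of the long-run covariance of the tensor process; and prove its consistency under Assumption~\ref{as:strongL4} and $b=o(n^{1/3})$ --- this last estimate is precisely the content of the paper's Lemma~\ref{lemma:sigklisidiasporasMBB} and Lemma~\ref{lemma:MBBtight}, including the $O_P(b^2/n)$ and $O_P(b^3/n^2)$ edge terms you fold into the bias discussion. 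Where you genuinely diverge is the final CLT criterion. You propose trace-norm convergence of the full conditional covariance operator plus a Lindeberg condition in Hilbert--Schmidt \emph{norm}, handled by truncating the tensor products; the paper instead invokes Lemma~5 of Kokoszka and Reimherr (2013), which requires only (i) asymptotic normality of the scalar projections $\langle\cdot,y\rangle_{HS}$ for each fixed Hilbert--Schmidt operator $y$, and (ii) convergence of $\E^*\|k^{-1/2}\sum_{t}\widehat{Y}_t^*\|_{HS}^2$. This buys a real economy: the Lindeberg condition then lives in $\mathbb{R}$ and is dispatched with Lahiri's truncation inequality, Kronecker's lemma and dominated convergence, and no operator-norm convergence of covariances need ever be established --- directional variance convergence plus convergence of the trace does the work of your trace-norm step (for positive operators the two packages are in fact equivalent, which is why your route, if carried out, would essentially re-derive the Kokoszka--Reimherr criterion at the price of a harder norm-Lindeberg verification under only two moments of the tensor products). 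Your subsequence device for upgrading the in-probability hypotheses to the metrized statement is the standard way to finish and is consistent with what the paper leaves implicit.
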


\section{\sc Testing equality of lag-zero autocovariance operators}
\label{sec:algorithm}
In this section, we consider 
the problem of testing  the equality of the lag-zero autocovariance operators for  a finite number of functional time series and use a modified version of the propopsed MBB procedure. This modification leads to a MBB-based testing procedure which generates functional pseudo observations that satisfy the null hypothesis that all lag-zero autocovariance operators are equal. Since this procedure is designed without having any particular statistic in mind, it 
can  potentially be applied to a broad range of possible test statistics which are appropriate  for the particular testing problem considered. 


To make things specific, consider $K$ independent,  $L^4$-$m$-approximable functional time series, denoted in the following by  $\mathds{X}_{K,M}=\{X_{i,t},\,i=1,2\ldots,K,\,t=1,2,\ldots,n_i\},$ where $K$ denotes the number of time series and $M = \sum_{i=1}^Kn_i$ the total number of observations, with $n_i$ denoting the length of the $i$-th time series. Let ${\mathcal C}_{i,0},\,i=1,2\ldots,K,$ be the lag-zero autocovariance operator of the $i$-th functional time series, i.e., $\mathcal{C}_{i,0}=\E[(X_{i,t}-\mu_i)\otimes(X_{i,t}-\mu_i)]$, where $\mu_i   = EX_{i,t}$.
The null hypothesis of interest is then
\begin{equation}
\label{eq-cov-fanis}
H_0:{\mathcal C}_{1,0}={\mathcal C}_{2,0}=\ldots={\mathcal C}_{K,0} 
\end{equation} 
and the alternative hypothesis is
$$
 H_1:\exists\, k,m \in\{1,2,\ldots,K\}\; \text{with}\; k\neq m\;\; \text{such that}\;\;  \|{\mathcal C}_{k,0}- {\mathcal C}_{m,0}\|_{HS}>0. 
 $$
By considering the operator processes  $\{\mathcal{Y}_{i,t}=(X_{i,t}-\mu_i)\otimes(X_{i,t}-\mu_i), t \in \mathbb{Z}\}$, \, $i=1,2\ldots,K$,  and denoting by  $\mu^\mathcal{Y}_i=\E \mathcal{Y}_{i,t}$  the expectation of   $\mathcal{Y}_{i,t}$, the  null hypothesis of interest can be equivalently written as
\begin{equation}
\label{eq-cov-tensor-fanis}
H_0:\mu^\mathcal{Y}_1=\mu^\mathcal{Y}_2=\ldots=\mu^\mathcal{Y}_K 
\end{equation} 
and the alternative hypothesis as
$$ H_1:\exists\, k,m \in\{1,2,\ldots,K\}\; \text{with}\; k\neq m\;\; \text{such that}\;\; \|\mu^\mathcal{Y}_k-\mu^\mathcal{Y}_m\|_{HS}>0.$$
Consequently, the aim  of the bootstrap is to generate a set of $K$  pseudo random elements $\mathds{Y}^*_{K,M}=\{\mathcal{Y}^*_{i,t},\,i=1,2\ldots,K$, $t=1,2,\ldots,n_i\}$ which satisfy the null hypothesis (\ref{eq-cov-tensor-fanis}), that is,  the  expectations  $E^*(\mathcal{Y}^*_{i,t})$ should be identical for all 
   $i=1,2,\ldots,K$.  This leads to  the  MBB-based testing procedure described  in the next section. \par

\subsection{\sc The MBB-based Testing Procedure}
\label{ssec:algorithm}

Suppose that, in order to test  the null hypothesis (\ref{eq-cov-tensor-fanis}), we use a   real-valued test statistic $T_M$, where, for simplicity, we assume that large values of $T_M$ argue against the null hypothesis. Since 
we focus on the tensor operators  $\mathcal{Y}_{i,t},\,t=1,2,\ldots,n_i,\,i=1,2\ldots,K,$ it is natural to assume that 
 the test statistic  $T_M$ is   based on the tensor product of the centered observed functions, that is on 
$\widehat{\mathcal{Y}}_{i,t}=(X_{i,t}-\overline{X}_{i,n_i})\otimes(X_{i,t}-\overline{X}_{i,n_i}),\,i=1,2\ldots,K,\,t=1,2,\ldots,n_i,$ where $\overline{X}_{i,n_i}$ is the sample mean function of the $i$-th population, i.e, $\overline{X}_{i,n_i}=(1/n_i)\sum_{t=1}^{n_i}X_{i,t}$. Suppose next, without los of generality,  that the null hypothesis (\ref{eq-cov-tensor-fanis}) is rejected if $T_M > d_{M,\alpha},$ where, for $\alpha\in(0,1)$,  $d_{M,\alpha}$ denotes the upper $\alpha$-percentage point of the distribution of $T_M$ under $H_0$.  We propose to approximate the distribution of $T_M$ under $H_0$ by 
the distribution of the  bootstrap quantity $T^*_M$, where the latter is obtained through the following steps.
\begin{steps}
\item Calculate the pooled mean 
$$
\overline{\mathcal{Y}}_M=\dfrac{1}{M}\sum_{i=1}^{K}\sum_{t=1}^{n_i}\widehat{\mathcal{Y}}_{i,t}.
$$
\item For $i=1,2,\ldots,K$, let $b_i=b_i(n)\in\{1,2,\ldots,n-1\}$ be the block size used for the $i$-th functional time series and let $N_i=n_i-b_i+1$. Calculate
$$
\widetilde{\mathcal{Y}}_{i,\xi}=\dfrac{1}{N_i}\sum_{t=\xi}^{N_i+\xi-1}\widehat{\mathcal{Y}}_{i,t},\,\xi=1,2,\ldots,b_i
$$
\item For simplicity assume that $n_i=k_ib_i$ and for $i=1,2,\ldots,K$, let $q^i_1,q^i_2,\ldots, q^i_{k_i}$ be i.i.d. integers selected from a discrete probability distribution which assigns the probability $1/N_i$ to each element of the set $\{1,2,\ldots,N_i\}.$ Generate bootstrap functional pseudo observations $\mathcal{Y}^*_{i,t},\,t=1,2,\ldots,n_i,\,i=1,2,\ldots,K$, as
$$
\mathcal{Y}^*_{i,t}=\overline{\mathcal{Y}}_M+\widehat{\mathcal{Y}}^*_{i,t}-\widetilde{\mathcal{Y}}_{i,\xi},\,\,\xi=b_i\text{ if } t\bmod b_i=0\text{ and } \xi=t\bmod b_i\text{ otherwise,}
$$
where
$\widehat{\mathcal{Y}}^*_{i,\xi+(s-1)b_i}=\widehat{\mathcal{Y}}_{i,q_s^i+\xi-1},\,s=1,2\ldots,k_i$\, and $\xi=1,2,\ldots,b_i$
\item Let $T_M^*$ be the same statistic as $T_M$ but calculated using,  instead of  the $\widehat{\mathcal{Y}}_{i,t}$'s the bootstrap  pseudo random elements  $\mathcal{Y}^*_{i,t},\,t=1,2,\ldots,n_i$, $i=1,2,\ldots,K$.  Given  $\mathds{X}_{K,M}$, denote by $D^*_{M,T}$
the distribution of $T^*_M$.  Then for  $\alpha \in (0,1)$, the null hypothesis $H_0$ is rejected  if
$$T_M > d_{M,\alpha}^*,$$
where $d_{M,\alpha}^*$ denotes the upper $\alpha$-percentage point of the distribution of $T_M^*$, i.e., $\mathbb{P}(T_M^*>d_{M,\alpha}^*)=\alpha$.
\end{steps}
Notice that the distribution $D^*_{M,T}$ is unknown but  it can be evaluated by Monte-Carlo.\par

\medskip
Before establishing validity of the described  MBB procedure some remarks are in order.  Observe  that the mean $\widetilde{\mathcal{Y}}_{i,\xi}$ calculated in Step~2, is the (conditional on $\mathds{X}_{K,M}$) expected value of  $\widehat{\mathcal{Y}}^*_{i,q_s^i+\xi-1}$ for $\xi=b_i$ if $t\bmod b_i=0$ and $\xi=t\bmod b_i$ otherwise. This motivates the  definition  
$$
\mathcal{Y}^*_{i,t}=\overline{\mathcal{Y}}_M+\widehat{\mathcal{Y}}^*_{i,t}-\widetilde{\mathcal{Y}}_{i,\xi},\,t=1,2,\ldots,n_i,\,i=1,2,\ldots,K,
$$
used  in Step 3 of the MBB algorithm. This definition  ensures that the generated  pseudo random elements $\mathcal{Y}^*_{i,t},\,t=1,2,\ldots,n_i,\,i=1,2,\ldots,K$,
satisfy the null hypothesis (\ref{eq-cov-tensor-fanis}). In fact, it is easily seen that the pseudo random elements 
$ \mathcal{Y}^*_{i,t}$ 
have (conditional on $\mathds{X}_{K,M}$) an expected value which is equal to $\overline{\mathcal{Y}}_M$, that 
is $ E^*( \mathcal{Y}^*_{i,t}) =\overline{\mathcal{Y}}_M $ for all $ t=1,\ldots, n_i$ and $i=1,\ldots, K$. 


\subsection{\sc Validity of the MBB-based Testing Procedure}
Although the proposed MBB-based testing procedure is not designed having any specific test statistic in mind, 
establishing its validity requires the consideration of a  specific class of statistics. In the following, and for simplicity, we focus on the case of two independent population, i.e., $K=2$.  
In this case, a natural approach to test  equality of the lag-zero autocovariance operators  
is to consider a  fully functional test statistic which evaluates  the difference between the empirical lag-zero autocovariance operators, for instance,  to use the test statistic 
$$
T_M=\dfrac{n_1n_2}{M}\|\widehat{\mathcal C}_{1,0}-\widehat{\mathcal C}_{2,0}\|^2_{HS} = \dfrac{n_1n_2}{M}\| \mathcal{\overline{Y}}_{1,n_1}-\mathcal{\overline{Y}}_{2,n_2}\|^2_{HS},
$$
where $\mathcal{\overline{Y}}_{i,n_i}=(1/n_i)\sum_{t=1}^{n_i}\widehat{\mathcal{Y}}_{i,t}$, $i=1,2$, and $M=n_1+n_2$. The following lemma  delivers 
the asymptotic  distribution of  $T_M$ under $H_0$.
\begin{lemma}
\label{thm:katanomiTmH0}
Let $H_0$ hold true,  Assumption~\ref{as:strongL4} be satisfied and assume that, as $\min\{n_1,n_2\}\to\infty,$ $n_1/M\to\theta\in(0,1).$  Then,
$$
T_M\overset{d}\to\|\mathcal{Z}_0\|^2_{HS}
$$
where $\mathcal{Z}_0=\sqrt{1-\theta}\mathcal{Z}_{1,0}-\sqrt{\theta}\mathcal{Z}_{2,0}$ and $\mathcal{Z}_{i,0},\,i=1,2$, are two independent mean zero Gaussian Hilbert-Schmidt operators with covariance operators $\Gamma_{i,0} $, $i=1,2$, given by 
\begin{align*}
\Gamma_{i,0}&=\E[((X_{i,1}-\mu_i)\otimes (X_{i,1}-\mu_i)-\mathcal{C}_{i,0})\otimes ((X_{i,1}-\mu_i)\otimes (X_{i,1}-\mu_i)-\mathcal{C}_{i,0})]\\&\quad+2\sum_{s=2}^{\infty}\E[((X_{i,1}-\mu_i)\otimes (X_{i,1}-\mu_i)-\mathcal{C}_{i,0})\otimes ((X_{i,s}-\mu_i)\otimes (X_{i,s}-\mu_i)-\mathcal{C}_{i,0})].
\end{align*}
\end{lemma}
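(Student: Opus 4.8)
The plan is to deduce the lemma from the marginal central limit theorem for the sample lag-zero autocovariance operator recorded in Section~\ref{sec:notations}, combined with the independence of the two groups and the continuous mapping theorem.

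First I would observe that, by definition, $\overline{\mathcal{Y}}_{i,n_i}=\widehat{\mathcal C}_{i,0}$ for $i=1,2$, so that $T_M=(n_1n_2/M)\|\widehat{\mathcal C}_{1,0}-\widehat{\mathcal C}_{2,0}\|_{HS}^2$. Applying to each group separately the central limit theorem stated in Section~\ref{sec:notations} (specialised to $h=0$), the $L^4$-$m$-approximability guaranteed by Assumption~\ref{as:strongL4} yields, as $\min\{n_1,n_2\}\to\infty$,
$$\sqrt{n_i}\big(\widehat{\mathcal C}_{i,0}-\mathcal C_{i,0}\big)\;\Rightarrow\;\mathcal Z_{i,0},\qquad i=1,2,$$
where $\mathcal Z_{i,0}$ is a mean-zero Gaussian Hilbert-Schmidt operator whose covariance operator equals the two-sided series $\Gamma_{i,0}$. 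Here I would check that the two-sided sum of Section~\ref{sec:notations} rewrites, by strict stationarity, as the lag-zero term plus twice the sum over positive lags displayed in the statement, using that the lag-$s$ and lag-$(-s)$ cross-covariance operators are mutually adjoint and hence contribute equally to the (self-adjoint) covariance operator.

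Next, since the two time series $\{X_{1,t}\}$ and $\{X_{2,t}\}$ are independent, the two marginal convergences combine into the joint weak convergence of the pair $\big(\sqrt{n_1}(\widehat{\mathcal C}_{1,0}-\mathcal C_{1,0}),\,\sqrt{n_2}(\widehat{\mathcal C}_{2,0}-\mathcal C_{2,0})\big)$ to $(\mathcal Z_{1,0},\mathcal Z_{2,0})$, with $\mathcal Z_{1,0}$ and $\mathcal Z_{2,0}$ independent. Under $H_0$ we may set $\mathcal C_{1,0}=\mathcal C_{2,0}$, so the centring operators cancel in the difference and
$$\sqrt{\tfrac{n_1n_2}{M}}\big(\widehat{\mathcal C}_{1,0}-\widehat{\mathcal C}_{2,0}\big)=\sqrt{\tfrac{n_2}{M}}\,\sqrt{n_1}\big(\widehat{\mathcal C}_{1,0}-\mathcal C_{1,0}\big)-\sqrt{\tfrac{n_1}{M}}\,\sqrt{n_2}\big(\widehat{\mathcal C}_{2,0}-\mathcal C_{2,0}\big).$$
Using $n_1/M\to\theta$ and $n_2/M\to1-\theta$, Slutsky's theorem together with the joint convergence above gives
$$\sqrt{\tfrac{n_1n_2}{M}}\big(\widehat{\mathcal C}_{1,0}-\widehat{\mathcal C}_{2,0}\big)\;\Rightarrow\;\sqrt{1-\theta}\,\mathcal Z_{1,0}-\sqrt{\theta}\,\mathcal Z_{2,0}=\mathcal Z_0.$$

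Finally I would invoke the continuous mapping theorem for the map $\Psi\mapsto\|\Psi\|_{HS}^2$, which is continuous on the space of Hilbert-Schmidt operators, to conclude $T_M\overset{d}\to\|\mathcal Z_0\|_{HS}^2$, as claimed. The bulk of the technical work, namely the marginal central limit theorem under $L^4$-$m$-approximability, has already been carried out in Section~\ref{sec:notations}, so the remaining steps are essentially bookkeeping. The one point that requires a little care, and which I regard as the main (if modest) obstacle, is promoting the two marginal convergences to a genuinely \emph{joint} statement: this is where independence of the groups is essential, and one should verify that the limiting pair is jointly Gaussian with independent components rather than merely marginally Gaussian, after which the linear combination and the continuous norm map follow immediately.
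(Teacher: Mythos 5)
Your proposal is correct and takes essentially the same route as the paper's proof: the paper likewise invokes the central limit theorem of Kokoszka and Reimherr (2013, Theorem~3) to obtain the joint weak convergence of $\bigl(\sqrt{n_1}(\widehat{\mathcal C}_{1,0}-\mathcal C_{1,0}),\sqrt{n_2}(\widehat{\mathcal C}_{2,0}-\mathcal C_{2,0})\bigr)$ to the independent pair $(\mathcal Z_{1,0},\mathcal Z_{2,0})$, then uses exactly your decomposition with the common operator $\widetilde{\mathcal C}_0$ under $H_0$ followed by Slutsky and continuous mapping. Your additional care in promoting the marginal limits to a joint statement via independence, and in rewriting the two-sided covariance series as the lag-zero term plus twice the positive-lag sum, correctly fills in steps the paper leaves implicit.
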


%


As it is seen from the above lemma, the limiting distribution of $T_M$  depends on the difficult to estimate  covariance operators $\Gamma_{i,0}$, $i=1,2,$ which describe   the entire  fourth order structure of the underlying functional processes  $\mathds{X}_i $. This makes the 
 implementation of the derived asymptotic result for calculating critical values of the $T_M$ test a difficult task. Theorem \ref{thm:consistentMBB} below shows that the MMB-based testing procedure 
estimates consistently the limiting distribution $ \|\mathcal{Z}_0\|^2_{HS}$ of the $T_M$ test and, consequently,  that it 
 can be applied  to estimate  the  critical values of interest.\\

For this, we  apply the MBB-based testing procedure introduced in Section~\ref{ssec:algorithm}  to generate $ \{\mathcal{Y}_{i,t}^*, t=1,2, \ldots. n_i\}$, $ i\in \{1,2\}$,  and use  the bootstrap pseudo statistic 
$$
T^*_M=\dfrac{n_1n_2}{M}\|\mathcal{\overline{Y}}^*_{1,n_1}-\mathcal{\overline{Y}}^*_{2,n_2}\|^2_{HS},
$$
where 
$\mathcal{\overline{Y}}_{i,n_i}^*=(1/n_i)\sum_{t=1}^{n_i}\mathcal{Y}_{i,t}^*$, $i=1,2$.
We then have the following result. 

\begin{thm}
\label{thm:consistentMBB}
Let Assumption~\ref{as:strongL4} be satisfied and assume that $\min\{n_1,n_2\}\to\infty,$ $n_1/M\to\theta\in(0,1)$. Also, for $i\in\{1,2\}$, let the block size $b_i=b_i(n)$ satisfies $b_i^{-1}+b_in_i^{-1/3}=o(1),$ as $n_i\to\infty$. Then,
$$
\sup_{x\in \mathbb{R}} \bigl| P(T_M^*\leq x \mid \mathds{X}_{K,M})-P_{H_0}(T_M\leq x) \bigr|\to 0,\,\,\,\,\text{in probability},
$$
where $P_{H_0}(X\leq \cdot)$ denotes the distribution function of the random variable $X$ when $H_0$ is true.
\end{thm}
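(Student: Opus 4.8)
The plan is to reduce the bootstrap statistic $T_M^*$ to two independent copies of the MBB-centred sample autocovariance operator, to which Theorem~\ref{thm:CLTmbb} applies directly, and then to combine this with Lemma~\ref{thm:katanomiTmH0} via a continuous-mapping and P\'olya argument. First I would exploit the structure of Step~3 of the testing procedure. Since $\E^*(\widehat{\mathcal{Y}}^*_{i,t})=\widetilde{\mathcal{Y}}_{i,\xi}$, the pooled mean $\overline{\mathcal{Y}}_M$ added in Step~3 is common to both groups and cancels in the difference, so that
$$
\overline{\mathcal{Y}}^*_{1,n_1}-\overline{\mathcal{Y}}^*_{2,n_2}=A_1^*-A_2^*,\qquad A_i^*:=\frac{1}{n_i}\sum_{t=1}^{n_i}\big(\widehat{\mathcal{Y}}^*_{i,t}-\widetilde{\mathcal{Y}}_{i,\xi_t}\big)=\widehat{\mathcal{C}}^*_{i,0}-\E^*(\widehat{\mathcal{C}}^*_{i,0}).
$$
Consequently $T_M^*=\big\|\sqrt{n_2/M}\,\sqrt{n_1}\,A_1^*-\sqrt{n_1/M}\,\sqrt{n_2}\,A_2^*\big\|_{HS}^2$, and the quantities $\sqrt{n_i}\,A_i^*=\sqrt{n_i}\big(\widehat{\mathcal{C}}^*_{i,0}-\E^*(\widehat{\mathcal{C}}^*_{i,0})\big)$ are \emph{exactly} the objects whose conditional law is controlled by Theorem~\ref{thm:CLTmbb} with $h=0$, applied separately to each of the two time series.

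Next I would pass to the limit. Theorem~\ref{thm:CLTmbb}, applied to group $i$, gives that conditionally on $\mathds{X}_{K,M}$ (in probability) $\sqrt{n_i}\,A_i^*$ converges weakly to a mean-zero Gaussian Hilbert--Schmidt operator $\mathcal{Z}_{i,0}$ with covariance operator $\Gamma_{i,0}$, the long-run covariance appearing in Lemma~\ref{thm:katanomiTmH0}. Because the two samples are independent and are resampled with independent randomisations, the joint conditional law of $(\sqrt{n_1}\,A_1^*,\sqrt{n_2}\,A_2^*)$ factorises, so the marginal convergences upgrade to joint convergence towards $(\mathcal{Z}_{1,0},\mathcal{Z}_{2,0})$ with the two limits \emph{independent}. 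Since $n_2/M\to 1-\theta$ and $n_1/M\to\theta$, a Slutsky argument together with the continuous-mapping theorem for the map $(\Psi_1,\Psi_2)\mapsto\|\sqrt{1-\theta}\,\Psi_1-\sqrt{\theta}\,\Psi_2\|_{HS}^2$ yields that, conditionally on $\mathds{X}_{K,M}$ and in probability,
$$
T_M^*\ \overset{d}{\longrightarrow}\ \big\|\sqrt{1-\theta}\,\mathcal{Z}_{1,0}-\sqrt{\theta}\,\mathcal{Z}_{2,0}\big\|_{HS}^2=\|\mathcal{Z}_0\|_{HS}^2,
$$
which is precisely the limit of $T_M$ under $H_0$ identified in Lemma~\ref{thm:katanomiTmH0}.

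Finally I would convert this weak convergence of the conditional law into the claimed uniform convergence of distribution functions. The limiting variable $\|\mathcal{Z}_0\|_{HS}^2$ has, by the spectral decomposition of the covariance operator of $\mathcal{Z}_0$, the representation $\sum_j\lambda_j\zeta_j^2$ with $\zeta_j$ i.i.d. standard normal and $\lambda_j\ge 0$ not all zero; hence its distribution function $F$ is continuous. P\'olya's theorem then turns the convergence in probability, for each fixed $x$, of $P(T_M^*\le x\mid\mathds{X}_{K,M})$ to $F(x)$ into $\sup_x|P(T_M^*\le x\mid\mathds{X}_{K,M})-F(x)|\to 0$ in probability, while the same theorem applied to Lemma~\ref{thm:katanomiTmH0} gives $\sup_x|P_{H_0}(T_M\le x)-F(x)|\to 0$. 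The triangle inequality then delivers the statement.

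The main obstacle is the rigorous treatment of the \emph{conditional} ``in probability'' convergence: Theorem~\ref{thm:CLTmbb} is stated for a metric metrising weak convergence and holds only in probability, so the joint convergence, the continuous-mapping/Slutsky step, and the final P\'olya step must all be carried out along subsequences (every subsequence admitting a further subsequence along which the relevant conditional laws converge almost surely), after which one returns to convergence in probability. Care is also needed to verify that the bootstrap long-run covariance produced by Theorem~\ref{thm:CLTmbb} coincides with the $\Gamma_{i,0}$ of Lemma~\ref{thm:katanomiTmH0}, so that the two limits genuinely match, and to confirm the non-degeneracy of $\mathcal{Z}_0$ that guarantees continuity of $F$.
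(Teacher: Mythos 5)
Your proof is correct and takes essentially the same route as the paper: the paper's (much terser) argument likewise applies Theorem~\ref{thm:CLTmbb} with $h=0$ to each group separately, invokes the independence of $\overline{\mathcal{Y}}^*_{1,n_1}$ and $\overline{\mathcal{Y}}^*_{2,n_2}$, combines the two Gaussian limits exactly as in the proof of Lemma~\ref{thm:katanomiTmH0}, and concludes via the triangle inequality. The details you supply --- the cancellation of the pooled mean $\overline{\mathcal{Y}}_M$, the identification $\sqrt{n_i}A_i^*=\sqrt{n_i}\bigl(\widehat{\mathcal{C}}^*_{i,0}-\E^*(\widehat{\mathcal{C}}^*_{i,0})\bigr)$, and the P\'olya step resting on continuity of the limiting distribution function --- are precisely what the paper's two-line proof leaves implicit.
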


\begin{remark}
If $H_1$ is true, that is if $\|\mathcal{C}_{1,0}-\mathcal{C}_{2,0}\|_{HS}=\|\E \mathcal{Y}_{1,t}-\E \mathcal{Y}_{2,t}\|_{HS}>0$, then  it is easily seen that $ T_M \rightarrow \infty$ under the conditions on $ n_1$ and $n_2$ stated  in Lemma~\ref{thm:katanomiTmH0}. This, together with 
 Theorem ~\ref{thm:consistentMBB}  and  Slutsky's theorem, imply consistency of the $T_M$ test  based on   bootstrap critical values obtained using the distribution of $T_M^*$, i.e., the  power of the test approaches unity, as $n_1,n_2\to\infty.$
\end{remark}

\begin{remark}
\label{rem:MBB} 
The advantage of our approach to  translate the testing problem considered  to a testing problem of equality of mean functions and to apply the bootstrap to the time series of tensor operators $\mathcal{Y}_{i,t}$, $t=1,2, \ldots, n_i$,  $i=1, \ldots, K$,    is manifested  in the generality under which validity of the MBB-based testing  procedure is established in Theorem~\ref{thm:consistentMBB}. To elaborate, a MBB approach which would select blocks from the pooled (mixed)  set of functional time series in order to generate  bootstrap  pseudo elements  which satisfy the null hypothesis,  will lead to the generation of $K$ new functional pseudo time series,  which asymptotically will imitate correctly the pooled second {\it and}~ the  fourth order  moment  structure of the underlying functional processes.  As a consequence,  the limiting distribution of $ T_M$ as stated in Lemma~\ref{thm:katanomiTmH0} and that of the  corresponding MBB analogue will coincide only if $ \Gamma_1=\Gamma_2$. This obviously restricts  the class of processes for which the 
MBB procedure is   consistent. In the more simple i.i.d. case, a  similar limitation exists  by the condition $\mathcal{B}_1=\mathcal{B}_2$ imposed in Theorem 1 of Paparoditis and Sapatinas (2016). 
Notice that this  limitation can be resolved by applying also in the i.i.d. case the basic bootstrap idea  proposed in this paper. 
That is, to first translate  the testing problem to one  of testing equality 
of means of  samples consisting of  the   i.i.d. tensor operators  and then to apply an appropriate  i.i.d. bootstrap procedure.
\end{remark}

\section{\sc Numerical Results}
\label{sec:numerical}
In this  section, we investigate via simulations the size and power  behavior of the 
MBB-based testing procedure applied to testing the equality of lag zero autocovariance operators and we illustrate its applicability 
 by considering   a real life data-set.
\subsection{\sc Simulations}
In the  simulation experiment, two functional time series  $X_{1,t}$ and $X_{2,t}$ are  generated  from  the  functional autoregressive (FAR) models, 
\begin{align}
\label{model:FAR1}
X_{1,t}(u)&=\int\psi(u,v)X_{1,t-1}(v)\,\mathrm{d}v+\delta X_{1,t-2}(u)+B_{1,t}(u)  \nonumber  \\ 
X_{2,t}(u)&=\int\psi(u,v)X_{2,t-1}(v)\,\mathrm{d}v+B_{2,t}(u) 
\end{align}
or from  the  functional moving average (FMA) models,
\begin{align}
\label{model:FMA1a}
X_{1,t}(u)&=\int\psi(u,v)B_{1,t-1}(v)\,\mathrm{d}v+\delta B_{1,t-2}(u)+B_{1,t}(u) \nonumber  \\
X_{2,t}(u)&=\int\psi(u,v)B_{2,t-1}(v)\,\mathrm{d}v+B_{2,t}(u).
\end{align}
The kernel function $\psi(\cdot,\cdot)$ in the above  models is equal and it is given  by
$$
\psi(u,v)=\frac{\displaystyle \mathrm{e}^{-(u^2+v^2)/2}}{\displaystyle 4\int\mathrm{e}^{-t^2}\mathrm{d}t},\ \ (u,v)\in[0,1]^2,   
$$
while the $B_{i,t}(\cdot)$'s ($i=1,2)$ are  generated as  i.i.d. Brownian bridges, independent for different $i$.  
Notice that, in both cases above, $\delta=0$ corresponds to $H_0$ while $\delta>0$ corresponds to $H_1$.

All curves were approximated using $T=21$ equidistant points $\tau_1,\tau_2,\ldots,\tau_{21}$ in the unit interval $\mathcal{I}$ and  transformed into functional objects using the Fourier basis with $21$ basis functions.
Functional time series of length  $n_1 = n_2= 200$   are   then generated and  testing the null hypothesis  $ H_0: \mathcal{C}_{1,0}=\mathcal{C}_{2,0}$ is considered using the $T_M$ test investigated  Section 3.2.   All bootstrap calculations are  based on $B = 1000$ bootstrap replicates,  $R = 1000$ model repetitions have been considered and  a range of different block sizes have been used.  Since  $n_1=n_2$ we  set for simplicity 
 $b = b_1 = b_2$.  
 
 Regarding the selection of $b$ we mention the following. As an inspection of the proof of Theorem 2.1 shows, the MBB estimator  of the distribution of interest  also  delivers   a lag-window type estimator of the covariance operator $\Gamma_0$ of  the limiting Gaussian process $ \mathcal{Z}_0$ using implicitly  the  Bartlett lag-window with ``truncation lag" the block size $b$; see also equation (\ref{eq.mean-MBB-star}). Viewing   the choice of $b$  as the selection of the truncation lag in the aforementioned  lag window type estimator, allows for the use of some results available  in the literature in order  to select $b$. To elaborate,  the choice of the truncation lag  in the functional set-up   has been  discussed in  Horv\'ath {\em et al.} (2016)  and  Rice and  Shang (2017), where different procedures to select this parameter have been investigated. In our context, we found  
 the   simple rule  proposed by Rice and Shang (2017) quite effective according to   
 which   the block length  $ b$ is set equal to the smallest integer larger or equal   to $ n^{0.3}$. Various choices of the block length $b$ have been considered in our simulations.

 The $T_M$ test has  been applied using three  standard nominal levels $\alpha=0.01,$ $0.05$ and $0.10.$ 
Notice that  $\delta=0$  corresponds to the null hypothesis while to investigate the power behavior of the  test  we set $\delta=0$ for the first functional time series and allow for $\delta\in\{0.2,0.5,0.8\}$ for the second and  for each of the two different models considered. The results obtained for different values of the block size $b$ using the FAR model (\ref{model:FAR1}) as well as the   FMA model (\ref{model:FMA1a}) are shown in Table~\ref{tab:apotelesmataNEW}.  As it is seen from  this table, the MBB based testing procedure retains the nominal level  with  good  size results for both dependence structures  considered. Furthermore, the power of the $T_M$ test increases as the deviations from the null increase  and reaches  high values   
for the large values of the deviation parameter $\delta$ considered.

\begin{table}[htbp]
  \centering
      \begin{tabular}{|ccc|ccccc|}
    \hline
& & & \multicolumn{5}{c|}{Block Size, $b$=}\\
& $\delta$ &$\alpha$ & 2& 4& 6& 8& 10\\
\hline
FAR (1)&$0$&0.01 &0.011&0.022&0.014&0.021&0.018\\
& & 0.05&0.050 &0.062&0.063&0.083&0.076\\
& & 0.10&0.108 &0.123&0.108&0.132&0.125\\
& 0.2& 0.01&0.025 &0.018&0.020&0.025&0.026\\
& & 0.05&0.089 &0.093&0.085&0.081&0.089\\
& & 0.10&0.151 &0.171&0.150&0.156&0.151\\
& 0.5& 0.01&0.593 &0.495&0.411&0.381&0.375\\
& & 0.05&0.776 &0.731&0.698&0.676&0.672\\
& & 0.10&0.839 &0.813&0.794&0.788&0.791\\
& 0.8& 0.01&1.000&1.000&1.000&0.997&0.989\\
& & 0.05&1.000 &1.000&1.000&1.000&1.000\\
& & 0.10&1.000 &1.000&1.000&1.000&1.000\\
FAM (1)&$0$&0.01 &0.012&0.013&0.014&0.013&0.015\\
& & 0.05&0.065 &0.073&0.060&0.054&0.071\\
& & 0.10&0.121 &0.108&0.118&0.116&0.127\\
& 0.2& 0.01&0.015 &0.022&0.019&0.024&0.016\\
& & 0.05&0.055 &0.076&0.065&0.079&0.062\\
& & 0.10&0.1114 &0.130&0.119&0.123&0.122\\
& 0.5& 0.01&0.148 &0.125&0.143&0.121&0.131\\
& & 0.05&0.339 &0.239&0.330&0.292&0.289\\
& & 0.10&0.479 &0.421&0.468&0.412&0.418\\
& 0.8& 0.01&0.074&0.695&0.689&0.693&0.681\\
& & 0.05&0.920 &0.889&0.899&0.887&0.900\\
& & 0.10&0.957 &0.944&0.941&0.949&0.957\\
\hline
     \end{tabular}
\caption{Empirical size and power of the  $T_M$ test  using  bootstrap critical values.}
  \label{tab:apotelesmataNEW}
\end{table}

\subsection{\sc Cyprus Daily Temperature Data}
In this section, the  bootstrap based  $T_M$ testing is applied to a real-life data set which consists  of  daily temperatures recorded in $15$ minutes intervals in Nicosia,  Cyprus, i.e., there are $96$ temperature measurements for each day. Sample A and Sample B consist of the daily temperatures recorded in Summer~2007 (01/06/2007-31/08/2007) and Summer~2009 (01/06/2009-31/08/2009) respectively. The measurements have been transformed into functional objects using the Fourier basis with 21 basis functions. All curves are rescaled in order to be defined in the interval $\mathcal{I}=[0,1]$. Figure~\ref{fig:temperature} shows the estimated lag-zero autocovariance  kernels  $\widehat{c}_i(u,v)=n_i^{-1}\sum_{t=1}^{n_i}(X_{i,t}(u)-\overline{X}_i(u))(X_{i,t}(v)-\overline{X}_i(v)) $, $(u,v) \in  \mathcal{I} \times \mathcal{I}$, associated with the lag-zero autocovariance operators for  the temperature curves of the summer 2007 ($i=1$) and of the summer  2009 ($i=2$). We are interested in testing whether 
  the covariance structure of the daily  temperature curves  of the two summer periods  is the same, a question which can be important in the context of investigating the changing behavior of the Mediterranean climate. Furthermore, such a question could also arise if one is concerned with the stationarity behavior of the centered time series of temperature curves. The bootstrap $p$-values of the MBB-based  $T_M$ test using $B=1000$ bootstrap replicates  and  for a selection of different  block sizes $b=b_1=b_2$, are equal to 0.016 ($b=3$), 0.015 ($b=4$), 0.033 ($b=5$) and 0.030 ($b=6$).  Notice  that in this example,  $n_1=n_2=92$  and that, for  
this sample size, the value of $b=4$ is the one chosen  by the simple selection rule discussed in the previous section.   As it is evident from these results, the bootstrap $p$-values of the MBB-based test are quite small and lead to a rejection of 
$H_0$, for instance at the commonly used 5\% level. 

\begin{figure}[h]
\centering
\includegraphics[trim = 5mm 10mm 5mm 10mm, clip=true,width=0.9\textwidth]{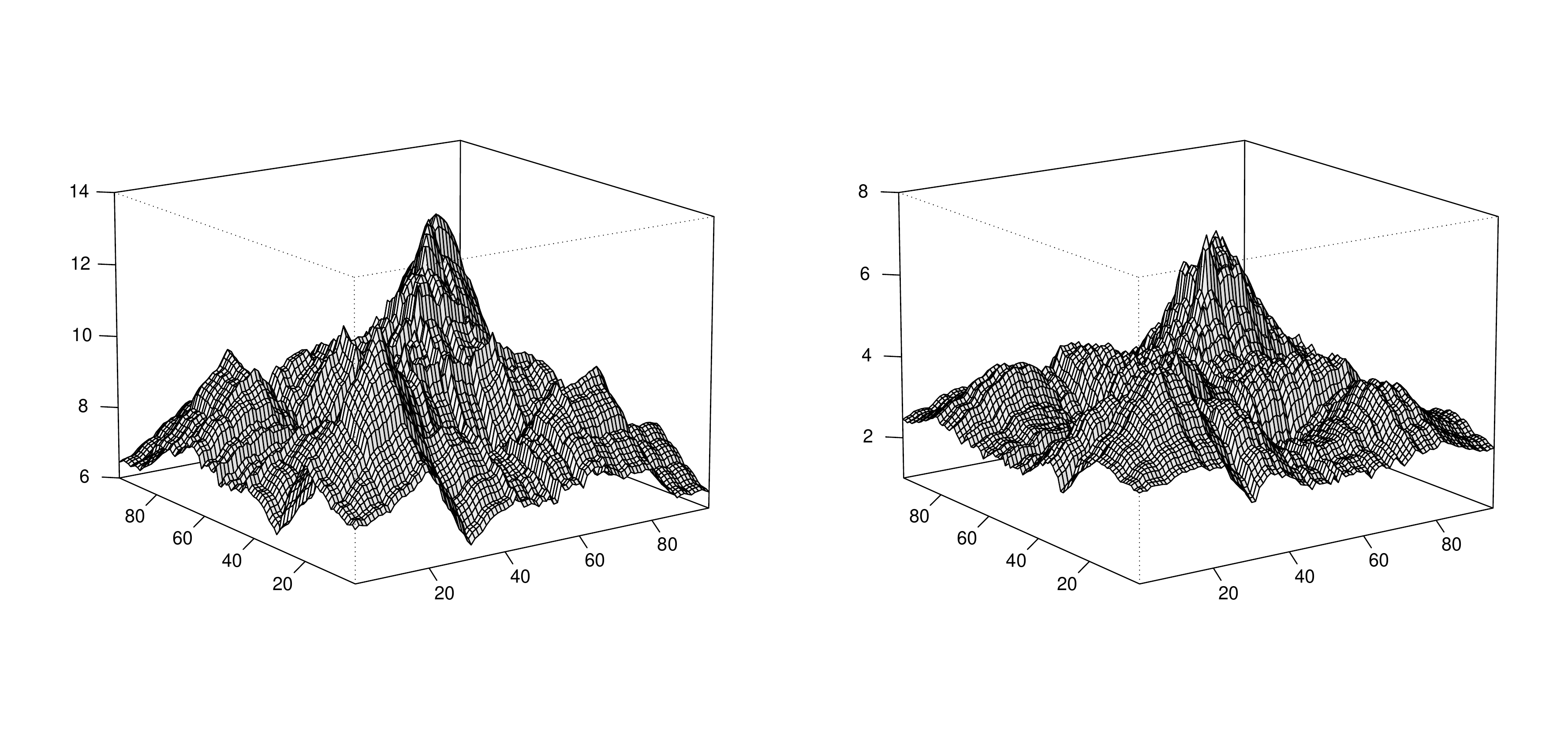}
\caption{Estimated lag-zero autocovariance  kernels   of the temperature curves:  Summer 2007 (left panel) and summer 2009 (right panel).}
\label{fig:temperature}
\end{figure}

To  see were the differences between the temperatures in the two summer periods come from and 
to better interpret   the test results, Figure \ref{fig:grayplotNew}  presents a contour  plot of the estimated  squared differences 
$ |\widehat{c}_1(u,v)-\widehat{c}_2(u,v)|^2$
for different values of $(u,v)$ in the plane $ [0,1]^2$.  
Note that the Hilbert-Schmidt distance  $\| \widehat{\mathcal{C}}_{1,0} -\widehat{\mathcal{C}}_{2,0}\|_{HS}$   appearing in the  test statistic $ T_M$  can be approximated by the discretized quantity $ \sqrt{ L^{-2}\sum_{i=1}^L\sum_{j=1}^L  
|\widehat{c}_1(u_i,v_j)-\widehat{c}_2(u_i,v_j)|^2}$, where  $L=96$ is the number of equidistant  time points in the interval $[0,1]$ used and at which the temperature measurements are recorded. Large values of $  |\widehat{c}_1(u_i,v_j)-\widehat{c}_2(u_i,v_j)|^2$ (i.e., dark gray regions in Figure \ref{fig:grayplotNew}) contribute strongly to the value of the test statistic $T_M$ and pinpoint to regions   where large differences between the corresponding lag-zero autocovariance operators occur. Taking into account the symmetry of the covariance kernel  $c(\cdot,\cdot)$, Figure \ref{fig:grayplotNew} is very informative. It shows  that the main  differences between the two covariance operators  are concentrated between the time regions 3:00am to 6:00am and  3:00pm to 8:00pm of the daily temperature curves,   with the  
strongest  contributions to the test statistic  being due to the largest differences recorded  around 4:00 to 4:30 in the morning and  6:30 to  7:30 in the evening.



\begin{figure}
\centering
\includegraphics[trim = 5mm 10mm 5mm 10mm, clip=true,width=0.7\textwidth, height=0.45 \textheight]{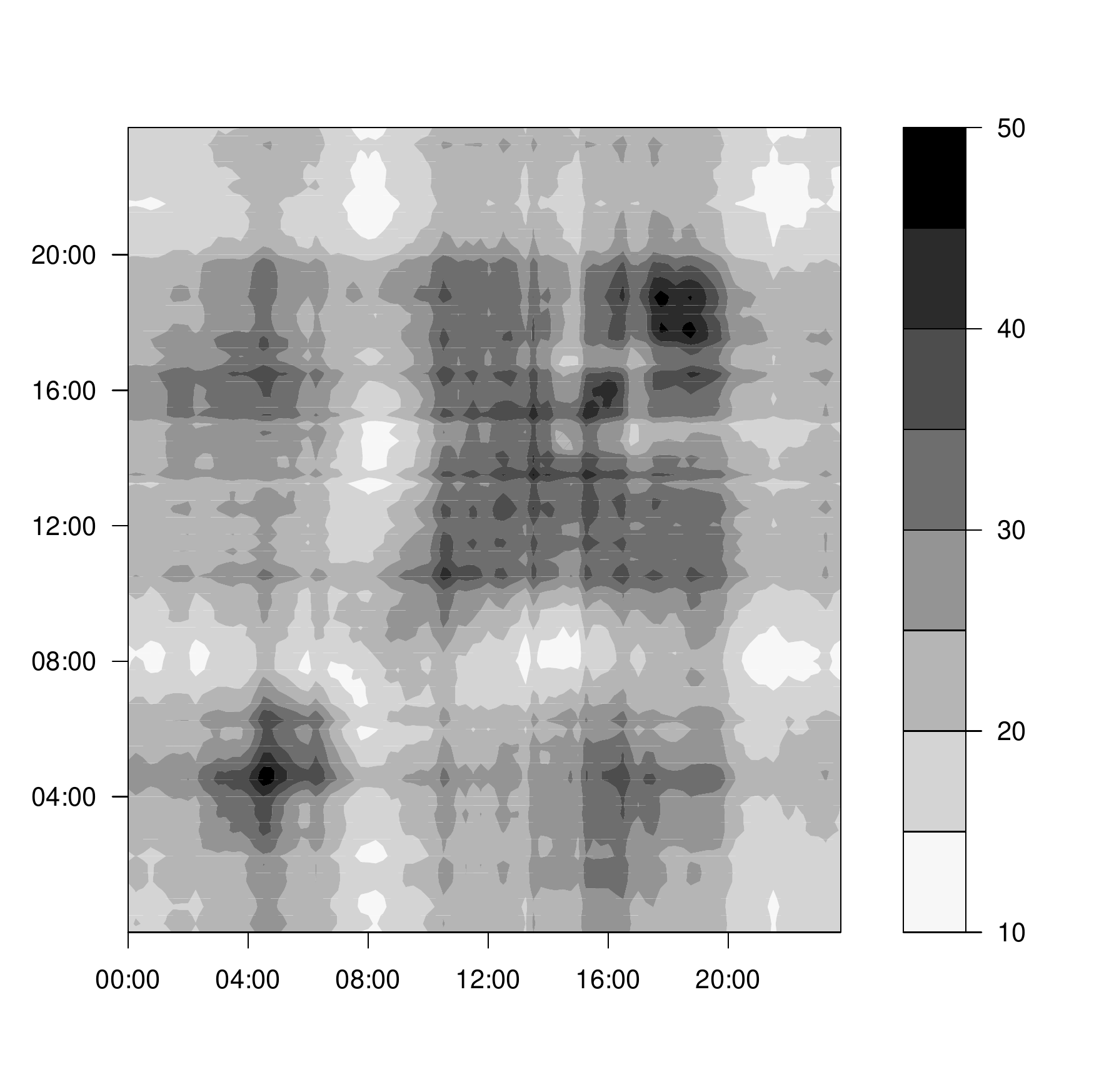}
\caption{Contour plot of the estimated differences $ |\widehat{c}_1(u_i,v_j)-\widehat{c}_2(u_i,v_j)|^2$  for   $(i,j) \in \{1,2, \ldots, 96\}$.}
\label{fig:grayplotNew}
\end{figure}

\section{\sc Appendix : Proofs}
\label{sec:proofs}
In the following we assume, without loss of generality, that $\mu=0$ and we consider the case $h=0$ only.
Furthermore,  we let $\widehat{\widetilde{\C}}_0=n^{-1}\sum_{t=1}^{n}X_t\otimes X_t,$ $Z_t=X_t\otimes X_t-\C_0,$ $\widehat{Z}_t=X_t\otimes X_t-\widehat{\widetilde{\C}}_0,$ $\widetilde{Z}_t=X_t\otimes X_t,$ $Z_{t,m}=X_{t,m}\otimes X_{t,m}-\C_0,$ $Z^*_t=X^*_t\otimes X^*_t$ and $\widehat{Z}^*_t=X^*_t\otimes X^*_t-\widehat{\widetilde{\C}}_0.$ Also, we denote by $Z_t(u,v)$ the kernel of the integral operator $Z_t,$ i.e., $Z_t(u,v)=X_t(u)X_t(v)-c_0(u,v),$ where $c_0(u,v)=\E [X_t(u)X_t(v)],$ and by $Z_{t,m}(u,v)$ the kernel of the integral operator $Z_{t,m},$ i.e., $Z_{t,m}(u,v)=X_{t,m}(u)X_{t,m}(v)-c_0(u,v).$

We first fix some notation and present two basic lemmas  which will be used in  the proofs. Towards this note first that we repeatedly use the fact that, by stationarity, $\E\|X_{t,m}-X_t\|^p=\E\|X_{0,m}-X_0\|^p$ and $\E\|X_{t,m}\|^p=\E\|X_t\|^p=\E\|X_0\|^p$ for $p\in\mathbb{N}$ and for all $t\in \mathbb{Z}.$ Also note that Kokoszka and Reimherr (2013) proved that the $L^4$-$m$-approximability of $\mathds{X}$ implies that the tensor product process $\{X_t\otimes X_t,\,t\in\mathbb{Z}\}$ is $L^2$-$m$-approximable. \\

For $X_{t,m}\otimes X_{t,m}$ the $m$-dependent approximation of $X_t\otimes X_t$, we, therefore, have
\begin{equation}\label{eq:L2}
\sum_{m=1}^{\infty}\bigg(\E\|X_t\otimes X_t-X_{t,m}\otimes X_{t,m}\|_{HS}^2\bigg)^{1/2}<\infty.
\end{equation}
Furthermore, since $\|X_0\otimes X_t\|_{HS}=\|X_0\|\|X_t\|$ for all $t\in \mathbb{Z},$ and using Cauchy-Schwarz's inequality, we get, for all $t\in \mathbb{Z},$
\begin{align*}
\E\|X_t\otimes X_t -X_{t,m}\otimes X_{t,m}\|_{HS}^2
&\leq 2\E\|X_t\otimes (X_t-X_{t,m})\|_{HS}^2+2\E\| (X_t-X_{t,m})\otimes X_{t,m}\|_{HS}^2\\&\leq 4(\E\|X_t\|^4)^{1/2}(\E\|X_t-X_{t,m}\|^4)^{1/2}.
\end{align*}
Therefore, by Assumption~\ref{as:strongL4}, we get, for all $t\in \mathbb{Z},$
\begin{equation}\label{eq:strongL2}
\lim_{m\to\infty} m\left(\E\|X_t\otimes X_t -X_{t,m}\otimes X_{t,m}\|_{HS}^2\right)^{1/2}\leq 2(\E\|X_t\|^4)^{1/4}\lim_{m\to\infty}m(\E\|X_t-X_{t,m}\|^4)^{1/4}=0
\end{equation}
and by the same arguments,
\begin{align*}
\|\E[X_0\otimes X_t]\|_{HS}=\|\E[X_0\otimes (X_t-X_{t,t}]\|_{HS}&\leq \left(\E\|X_0\|^2_{HS}\right)^{1/2}\left(\E\|X_0-X_{0,t}\|^2_{HS}\right)^{1/2}\\&\leq\left(\E\|X_0\|^2_{HS}\right)^{1/2}\left(\E\|X_0-X_{0,t}\|^4_{HS}\right)^{1/4}.
\end{align*}
Therefore, the $L^4$-$m$-approximability assumption implies that $\sum_{t\in\mathbb{Z}}\|\E[X_0\otimes X_t]\|_{HS}<\infty$.
\par
\medskip
To prove Theorem~\ref{thm:CLTmbb}, we establish below Lemma~\ref{lemma:sigklisidiasporasMBB} and Lemma~\ref{lemma:MBBtight}. Their proofs are given in the supplementary material.

\begin{lemma}\label{lemma:sigklisidiasporasMBB}$\,$
Let $g_b(\cdot)$ be a non-negative, continuous and bounded function defined on $\mathbb{R}$, satisfying $g_b(0)=1$, $g_b(u)=g_b(-u)$, $g_b(u)\leq 1$ for all $u$, $g_b(u)=0,$ if $|u|>c,$ for some $c>0.$ Assume that for any fixed $u$, $g_b(u)\to 1$ as $n\to\infty.$ Suppose that the process  $ \mathds{X}$   satisfies Assumption~\ref{as:strongL4} and  that $b=b(n)$ is a sequence of integers such that $b^{-1}+bn^{-1/3}=o(1)$ as $n\to\infty.$ Then, as $n\to\infty$,
$$
\big\|\sum_{s=-b+1}^{b-1}g_b(s)\hat{\Gamma}_s-\sum_{s=-\infty}^{\infty}\E[ Z_0 \otimes Z_s]\big\|_{HS}=o_p(1),
$$
where
$\hat{\Gamma}_s=\frac{1}{n}\sum_{t=1}^{n-s}\hat{Z}_t\otimes \hat{Z}_{t+s}$ for $0\leq s\leq b-1$ and $\hat{\Gamma}_s=\frac{1}{n}\sum_{t=1}^{n+s}\hat{Z}_{t-s}\otimes \hat{Z}_t$ for $-b+1\leq s<0.$
\end{lemma}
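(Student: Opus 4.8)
The plan is to read $\sum_{s=-b+1}^{b-1}g_b(s)\widehat{\Gamma}_s$ as a Bartlett-type lag-window estimator of the long-run covariance operator $\Sigma:=\sum_{s=-\infty}^{\infty}\Gamma_s$, with $\Gamma_s:=\E[Z_0\otimes Z_s]$, and to prove its consistency by a bias--variance decomposition in Hilbert--Schmidt norm. As a preliminary I would record that, since Kokoszka and Reimherr (2013) show that $\{X_t\otimes X_t\}$ is $L^2$-$m$-approximable, the operator autocovariances are summable, $\sum_{s\in\mathbb{Z}}\|\Gamma_s\|_{HS}<\infty$; hence $\Sigma$ is a well-defined Hilbert--Schmidt operator and $\|\sum_{|s|\geq b}\Gamma_s\|_{HS}\to0$.

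First I would remove the centring. Writing $\widehat{Z}_t=Z_t-D_n$ with $D_n:=\widehat{\widetilde{\C}}_0-\C_0=n^{-1}\sum_tZ_t$, bilinearity of the operator tensor product expands $\widehat{Z}_t\otimes\widehat{Z}_{t+s}$ into $Z_t\otimes Z_{t+s}$ plus terms each carrying a factor $D_n$. Since $\E\|D_n\|_{HS}^{2}=O(n^{-1})$ by the summability of the autocovariances of $\{Z_t\}$, so that $\|D_n\|_{HS}=O_p(n^{-1/2})$, summing these correction terms against $g_b(s)$ over the $O(b)$ relevant lags contributes $O_p(b/n)=o_p(1)$; it therefore suffices to treat $\widetilde{\Gamma}_s:=n^{-1}\sum_tZ_t\otimes Z_{t+s}$ in place of $\widehat{\Gamma}_s$. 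I would then split $\sum_{|s|<b}g_b(s)\widetilde{\Gamma}_s-\Sigma$ into three pieces. The tapering-and-truncation bias $\sum_{|s|<b}g_b(s)\Gamma_s-\Sigma$ vanishes deterministically: its tail $\sum_{|s|\geq b}\Gamma_s$ is negligible by summability, while $\sum_{|s|<b}(1-g_b(s))\Gamma_s\to0$ by dominated convergence, using $0\leq g_b\leq1$, the pointwise limit $g_b(s)\to1$, and the summable majorant $\|\Gamma_s\|_{HS}$. The finite-sample lag bias equals $-\sum_{|s|<b}g_b(s)(s/n)\Gamma_s$, whose norm is $O(b/n)=o(1)$. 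The remaining, stochastic, piece $\sum_{|s|<b}g_b(s)(\widetilde{\Gamma}_s-\E\widetilde{\Gamma}_s)$ is where the work lies.

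This stochastic term is the main obstacle, and the difficulty is one of moments: each $\widetilde{\Gamma}_s$ is quadratic in $Z$, hence quartic in $X$, whereas Assumption~\ref{as:strongL4} supplies only $\E\|X_0\|^4<\infty$, i.e.\ $\E\|Z_0\|_{HS}^2<\infty$. Thus $\E\|\widetilde{\Gamma}_s\|_{HS}^2$ need not be finite and the standard second-moment computation for a lag-window estimator is not available. I would combine two devices. Replacing $Z_t$ by its $m$-dependent version $Z_{t,m}=X_{t,m}\otimes X_{t,m}-\C_0$ with $m$ of order $b$, the approximation error is bounded in $L^1(\Omega)$ rather than $L^2(\Omega)$: a factorwise Cauchy--Schwarz bound on $\E\|Z_t\otimes Z_{t+s}-Z_{t,m}\otimes Z_{t+s,m}\|_{HS}$ involves only $\E\|Z_0\|^2$ and $(\E\|Z_0-Z_{0,m}\|^2)^{1/2}$, and the strengthened rate of Assumption~\ref{as:strongL4}, in the form~\eqref{eq:strongL2}, makes the latter $o(1/m)$; summed over the $O(b)$ lags with $m$ of order $b$ this is $o(1)$, hence $o_p(1)$ by Markov's inequality.

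For the $m$-dependent part, which still has no finite fourth moment, I would truncate $Z_{t,m}$ at a level $L=L(n)\to\infty$: on the truncated operators the fourth moment is at most $L^2\E\|Z_0\|^2$, so $m$-dependence leaves only $O(b)$ non-vanishing covariances per index and the variance of the weighted sum is of order $L^2b^3/n$, while the truncation remainder is controlled in $L^1(\Omega)$ through $\E[\|Z_0\|^2\mathds{1}\{\|Z_0\|>L\}]\to0$ together with the summability of the $\Gamma_s$. The bandwidth condition $b=o(n^{1/3})$, that is $b^3/n\to0$, is exactly what lets $L\to\infty$ while keeping $L^2b^3/n\to0$, so that both contributions vanish. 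Assembling the three pieces then gives the claim; the genuinely delicate point is this joint calibration of the approximation order $m$, the truncation level $L$ and the bandwidth $b$, forced by having only second-moment control on $Z$.
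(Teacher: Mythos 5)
Your skeleton is the natural one and most of it is sound: the decentring costs $O_p(b/n)+O_p(b^2n^{-3/2})=o_p(1)$; the tapering-and-truncation bias vanishes by $\sum_{s}\|\E[Z_0\otimes Z_s]\|_{HS}<\infty$ and dominated convergence; the $m$-dependent approximation with $m\asymp b$, bounded in $L^1(\Omega)$ by factorwise Cauchy--Schwarz, costs $O\bigl(b\,(\E\|Z_0-Z_{0,b}\|_{HS}^2)^{1/2}\bigr)=o(1)$, which is exactly where the strengthened rate \eqref{eq:strongL2} enters; and the $O_P(L^2b^3/n)$ variance bound for the truncated $m$-dependent part under $b=o(n^{1/3})$ is correct. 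The genuine gap is the truncation remainder. Lag by lag, Cauchy--Schwarz gives a bound of a constant times $\epsilon_L:=\bigl(\E[\|Z_0\|_{HS}^2\mathds{1}(\|Z_0\|_{HS}>L)]\bigr)^{1/2}$, and summing over the $O(b)$ lags yields $O(b\,\epsilon_L)$; the summability of the $\Gamma_s$ cannot reduce this, since the remainder is not an autocovariance, and any attempt to gain cancellation over $t$ through a second-moment computation on the untruncated part reintroduces $\E\|Z_0\|_{HS}^4\sim\E\|X_0\|^8$, which is not assumed. Under $\E\|X_0\|^4<\infty$ alone, $\epsilon_L\to0$ with \emph{no rate}, so $b\,\epsilon_L\to0$ forces $L$ to grow at a distribution-dependent rate, whereas your variance term caps $L$ at $o((n/b^3)^{1/2})$. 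These two requirements are incompatible in general: if, say, $\E[\|Z_0\|_{HS}^2\mathds{1}(\|Z_0\|_{HS}>L)]\sim(\log\log L)^{-1}$ and $b\sim\log n$, then $b\,\epsilon_L\to0$ requires $L\geq\exp(\exp(cb^2))$, astronomically beyond the permitted cap. So the joint calibration of $m$, $L$ and $b$ that you flag as ``delicate'' is, in the form you set it up, impossible rather than delicate.

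The repair is to truncate block sums rather than individual summands. For the Bartlett weights $g_b(s)=1-|s|/b$ --- the only case invoked in the proof of Theorem~\ref{thm:CLTmbb} --- one has, up to edge terms of order $O_P(b^2/n)$ of exactly the kind appearing in the paper's computation of $N^{-1}\sum_{t}\|Y_t\|_{HS}^2$, the identity $\sum_{|s|<b}(1-|s|/b)\,\widetilde{\Gamma}_s=N^{-1}\sum_{j=1}^{N}Y_j\otimes Y_j+O_P(b^2/n)$, where $Y_j=b^{-1/2}(Z_j+\cdots+Z_{j+b-1})$ and $\widetilde{\Gamma}_s$ is your uncentred lag estimator. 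The summands now have first moments $\E\|Y_j\otimes Y_j\|_{HS}=\E\|Y_j\|_{HS}^2$, and the family $\{\|Y_b\|_{HS}^2:b\geq1\}$ is uniformly integrable, because $Y_b$ obeys the CLT of Kokoszka and Reimherr (2013) while $\E\|Y_b\|_{HS}^2\to\sum_{s}\E\langle Z_0,Z_s\rangle_{HS}$, the second moment of the Gaussian limit. Truncating $Y_j\otimes Y_j$ at a fixed level $C$ therefore leaves a remainder $\E[\|Y_j\|_{HS}^2\mathds{1}(\|Y_j\|_{HS}^2>C)]$ that is small uniformly in $b$ and, crucially, is \emph{not} multiplied by $b$; the truncated part is a bounded average whose variance, after your $m$-dependent approximation applied at the block level, is $O(C^2b/n)\to0$, and letting $C\to\infty$ slowly completes the argument. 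This is K\"unsch's (1989) variance-consistency argument lifted to Hilbert space, and it matches the paper's own remark in Section 4.1 that the MBB implicitly delivers a Bartlett lag-window estimator of $\Gamma_0$; for the general $g_b$ of the lemma one would need a corresponding quadratic-form representation, but the Bartlett case is all that the paper's proofs require.
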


\begin{lemma}\label{lemma:MBBtight}
Let $g_b(\cdot)$ be a non-negative, continuous and bounded function satisfying the conditions of Lemma~\ref{lemma:sigklisidiasporasMBB}.
Suppose that $ \mathds{X}$ satisfies Assumption~\ref{as:strongL4} and that $b=b(n)$ is a sequence of integers such that $b^{-1}+bn^{-1/2}=o(1)$ as $n\to\infty.$ Then, as $n\to\infty$,
$$
\sum_{s=-b+1}^{b-1}g_b(s)\dfrac{1}{n}\sum_{t=1}^{n-|s|}\iint Z_t(u,v)Z_{t+|s|}(u,v)\mathrm{d}u\mathrm{d}v \overset{P}\to \sum_{s=-\infty}^{\infty}\E \iint Z_0(u,v)Z_s(u,v)\mathrm{d}u\mathrm{d}v.
$$
\end{lemma}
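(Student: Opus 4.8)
The plan is to prove convergence in probability by the standard mean--variance (Chebyshev) route. Writing $\gamma(s)=\E\langle Z_0,Z_s\rangle_{HS}$ and, for the sample version, $\hat\gamma(s)=\frac1n\sum_{t=1}^{n-|s|}\langle Z_t,Z_{t+|s|}\rangle_{HS}$ (noting that $\iint Z_t(u,v)Z_{t+|s|}(u,v)\,\mathrm{d}u\,\mathrm{d}v=\langle Z_t,Z_{t+|s|}\rangle_{HS}$), the left-hand side is $\hat V_n:=\sum_{s=-b+1}^{b-1}g_b(s)\hat\gamma(s)$ and the target is $V:=\sum_{s=-\infty}^{\infty}\gamma(s)$. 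I would establish $\E\hat V_n\to V$ and $\Var(\hat V_n)\to0$, after which the conclusion follows from Chebyshev's inequality.

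For the mean, by stationarity $\E\hat\gamma(s)=\frac{n-|s|}{n}\gamma(s)$, so $V-\E\hat V_n=\sum_{|s|\ge b}\gamma(s)+\sum_{|s|<b}\gamma(s)\big(1-g_b(s)\tfrac{n-|s|}{n}\big)$. The key input is absolute summability of $\gamma$, which follows from the $L^2$-$m$-approximability of the tensor process recorded in \eqref{eq:L2}: since the $m$-approximant $Z_{s,s}$ is independent of $Z_0$ and $\E Z_0=0$, one has $\E\langle Z_0,Z_{s,s}\rangle_{HS}=\langle\E Z_0,\E Z_{s,s}\rangle_{HS}=0$, whence $|\gamma(s)|=|\E\langle Z_0,Z_s-Z_{s,s}\rangle_{HS}|\le(\E\|Z_0\|_{HS}^2)^{1/2}(\E\|Z_s-Z_{s,s}\|_{HS}^2)^{1/2}$, and the right-hand factors are summable in $s$ by \eqref{eq:L2}. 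Given $\sum_s|\gamma(s)|<\infty$, the first tail vanishes as $b\to\infty$ (guaranteed by $b^{-1}=o(1)$), and the second term vanishes by dominated convergence, using $g_b(s)\to1$ and $(n-|s|)/n\to1$ pointwise together with the summable dominating sequence $2|\gamma(s)|$.

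For the variance, Cauchy--Schwarz across lags together with $g_b\le1$ gives $\Var(\hat V_n)\le\big(\sum_{|s|<b}\Var(\hat\gamma(s))^{1/2}\big)^2$, so it suffices to show $\Var(\hat\gamma(s))=O(1/n)$ uniformly in $s$; this yields $\Var(\hat V_n)=O(b^2/n)=o(1)$ precisely under the assumed rate $bn^{-1/2}=o(1)$. Writing $\Var(\hat\gamma(s))=\frac1{n^2}\sum_{t,t'}\Cov(\langle Z_t,Z_{t+|s|}\rangle_{HS},\langle Z_{t'},Z_{t'+|s|}\rangle_{HS})$, I would bound each summand by weak dependence: replacing the later pair of factors by its $m$-approximant renders it independent of the earlier pair, so the covariance decays in $|t-t'|$ at a rate controlled by \eqref{eq:L2}--\eqref{eq:strongL2}, leaving $O(n)$ non-negligible terms and hence the desired $O(1/n)$.

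The main obstacle is that the diagonal and near-diagonal covariance terms involve $\E\|Z_0\|_{HS}^4\sim\E\|X_0\|^8$, which need not be finite under the stated fourth-moment assumption. I would resolve this by a truncation argument: split each factor at a level $c$, run the variance computation above on the bounded part (where all moments exist, giving $\Var=O(b^2/n)$ for fixed $c$), and control the truncation remainder in $L^1$ uniformly in $n$ using only the second moment $\E\|Z_0\|_{HS}^2<\infty$ (finite because $\E\|X_0\|^4<\infty$), letting $c\to\infty$ after $n\to\infty$ in the usual three-$\varepsilon$ fashion. This truncation step, needed to reconcile the eighth-order moments appearing in the variance with the fourth-moment hypothesis, is where the real work lies.
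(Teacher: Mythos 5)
Your mean computation is sound (the bound $|\gamma(s)|\le(\E\|Z_0\|_{HS}^2)^{1/2}(\E\|Z_s-Z_{s,s}\|_{HS}^2)^{1/2}$, obtained from independence of the $m$-dependent approximant together with \eqref{eq:L2}, is exactly the right tool), and you correctly diagnosed that a naive variance bound requires $\E\|Z_0\|_{HS}^4\sim\E\|X_0\|^8$, which Assumption~\ref{as:strongL4} does not give. But the truncation scheme you propose to repair this does not close, and this is a genuine gap rather than a technicality. Let $\epsilon(c)$ denote the per-lag $L^1$ error incurred by truncating at a fixed level $c$; your bound for $\epsilon(c)$ is uniform in $s$ but not summable in $s$, so after summing over the $O(b)$ lags $|s|<b$ the remainder is of order $b\,\epsilon(c)$. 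Since $b=b(n)\to\infty$, this bound diverges for every \emph{fixed} $c$, so the prescription ``let $c\to\infty$ after $n\to\infty$ in the usual three-$\varepsilon$ fashion'' is unavailable: uniformity in $n$ per lag is not uniformity of a sum over a growing number of lags. Trying instead $c=c_n\to\infty$ requires simultaneously $b_n\epsilon(c_n)\to0$ and (from the truncated variance step, whose constants grow polynomially in $c$) roughly $b_n^2c_n^4/n\to0$; for $b_n$ near the permitted order $n^{1/2}$ the second condition forces $c_n$ to grow extremely slowly, and then the first demands a quantitative tail-decay rate for $\|Z_0\|_{HS}^2$ that the mere integrability $\E\|X_0\|^4<\infty$ does not supply. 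A secondary flaw sits in the truncated variance bound itself: the ``near-diagonal'' terms where your decoupling gives nothing form a band $|t-t'|\le s$ of width up to $b$, not $O(1)$, so the claim of ``$O(n)$ non-negligible terms'' uniformly in $s$ is unjustified as written (it can be rescued via the decay $\|\E[Z_0\otimes Z_h]\|_{HS}\le C\,\nu_h$, where $\nu_h=(\E\|Z_0-Z_{0,h}\|_{HS}^2)^{1/2}$, but that must be argued).

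The telltale sign of the gap is that your proof never invokes the strengthened part of Assumption~\ref{as:strongL4}, namely $m(\E\|X_0-X_{0,m}\|^4)^{1/4}\to0$, equivalently $m\nu_m\to0$ by \eqref{eq:strongL2}; this condition exists precisely to absorb the factor $b$ produced by the growing window, and any argument that does not use it is almost certainly irreparable at the fluctuation step. A workable route replaces $Z_t$ by its $b$-dependent approximant $Z_{t,b}$ \emph{before} any second-moment computation: the total $L^1$ replacement cost over all $|s|<b$ lags is $O(b\nu_b)=o(1)$; alternatively, observing that the Bartlett-weighted statistic equals $N^{-1}\sum_{t=1}^{N}\|b^{-1/2}\sum_{i=0}^{b-1}Z_{t+i}\|_{HS}^2$ up to $O_P(b^2/n)$ edge terms (this is how the quantity arises in the proof of Theorem~\ref{thm:CLTmbb}), a single replacement costs only $O(b^{1/2}\nu_b)$. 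The resulting $b$-dependent stationary array can then be handled by splitting $t$ into $O(b)$ subsequences of independent blocks and applying a weak law for row-wise i.i.d.\ triangular arrays with uniformly integrable first moments; the required uniform integrability of $\|b^{-1/2}\sum_{i<b}Z_i\|_{HS}^2$ follows from the CLT for the $L^2$-$m$-approximable process $\{Z_t\}$ combined with convergence of the second moments (your mean computation), with no fixed-level truncation and no moments beyond $\E\|Z_0\|_{HS}^2$. In short: mean part correct, variance part needs the $m\nu_m\to0$ device in place of the fixed-$c$ truncation, which cannot survive the sum over $b(n)\to\infty$ lags.
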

\medskip

\noindent \begin{proof}[Proof of Theorem~\ref{thm:CLTmbb}]
By the triangle inequality and Theorem~$3$ of Kokoszka and Reimherr (2013), the assertion of the theorem is established if we show that, as $n\to\infty,$
\begin{equation}
\label{expr:CLT}
\sqrt{n}(\hat{\C}_{0}^*-\E^*(\hat{\C}_{0}^*)) \Rightarrow  \mathcal{Z}_0,
\end{equation}
in probability, where $\mathcal{Z}_0$ is a mean zero Gaussian Hilbert Schmidt operator with covariance operator given by
$$
\Gamma_0=\E[Z_1\otimes Z_1]+2\sum_{s=2}^{\infty}\E[Z_1\otimes Z_s].
$$
Using Theorem~1 of Horv\'{a}th {\em et al.} (2013), 
we get
\begin{align}
\sqrt{n}(\hat{\C}_{0}^*-&\E^*(\hat{\C}_{0}^*))\nonumber \\ &
=\dfrac{1}{\sqrt{n}}\sum_{t=1}^{n}\bigg[X_t^*\otimes X_t^*-\E^*(X_t^*\otimes X_t^*)-\overline{X}_n\otimes (X_t^*-\E^*(X_t^*))- (X_t^*-\E^*(X_t^*))\otimes \overline{X}_n \bigg]\nonumber\\
& =\dfrac{1}{\sqrt{n}}\sum_{t=1}^{n}[Z_t^*-\E^*(Z_t^*)]+O_P(1/\sqrt{n}).\nonumber
\end{align}
Also note that 
\begin{align*}
\dfrac{1}{\sqrt{n}}\sum_{t=1}^{n}[Z_t^*-\E^*(Z_t^*)] & =\dfrac{1}{\sqrt{k}}\sum_{t=1}^{k}\left(\dfrac{1}{\sqrt{b}} \sum_{i=1}^{b}\big(Z^*_{(t-1)b+i}-\E^*(Z^*_{(t-1)b+i})\big)\right)\\
&  =\dfrac{1}{\sqrt{k}}\sum_{t=1}^{k}\widehat{Y}_t^*,
\end{align*}
with an obvious notation for $\widehat{Y}^*_t,\,t=1,2,\ldots,k$. Recall that due to the block bootstrap resampling scheme, the random variables $\widehat{Y}^*_t,\,t=1,2,\ldots,k,$ are i.i.d. Therefore to prove~\eqref{expr:CLT}, it suffices by Lemma~$5$ of Kokoszka and Reimherr (2013), to prove  that,
\begin{enumerate}[label=(\roman*)]
  \item $\left\langle \dfrac{1}{\sqrt{k}}\sum_{t=1}^{k}\widehat{Y}_t^*,y\right\rangle_{HS}\overset{d}\to N(0,\sigma^2(y))$ for every Hilbert Schmidt operator $y$ acting on $L^2,$ \label{condition1}
\end{enumerate}
and that
\begin{enumerate}[label=(\roman*)]
  \setcounter{enumi}{1}
  \item  $\lim_{n\to\infty}\E^*\left\|\dfrac{1}{\sqrt{k}}\sum_{t=1}^{k}\hat{Y}_t^*\right\|^2_{HS}$ exists and is finite.\label{condition2}
\end{enumerate}

To establish assertion~\ref{condition1}, we first prove that, as $n\to\infty$,
\begin{equation}
\label{eq:condition1i}
\Var^*\left(\left\langle \dfrac{1}{\sqrt{k}}\sum_{t=1}^{k}\widehat{Y}_t^*,y\right\rangle_{HS}\right)\overset{P}\to\sigma^2(y).
\end{equation}
Consider~\eqref{eq:condition1i} and notice that
\begin{equation}
\Var^*\left(\left\langle \dfrac{1}{\sqrt{k}}\sum_{t=1}^{k}\widehat{Y}_t^*,y\right\rangle_{HS}\right)= \Var^*\left(\langle \widehat{Y}_1^*,y\rangle_{HS}\right)= \E^*\left[\left\langle \dfrac{1}{\sqrt{b}}\sum_{t=1}^{b}(Z^*_t-\E^*(Z^*_t)),y\right\rangle_{HS}\right]^2\label{eq:diasporaW1}.
\end{equation}
Let $N=n-b+1,$ $\widetilde{Y}_t=b^{-1/2}(\widetilde{Z}_t+\widetilde{Z}_{t+1}+\ldots+\widetilde{Z}_{t+b-1}),$ $t=1,2,\ldots,N$ and $\widetilde{Y}^*_t=b^{-1/2}\sum_{i=1}^{b}Z^*_{(t-1)b+i},$ $t=1,2,\ldots,k.$ Since $n/N\to1$ as $n\to\infty,$ in the following we will occasionally replace $1/N$ by $1/n.$
Notice that,
\begin{align}
\E^*\left(\left\langle\dfrac{1}{\sqrt{b}}\sum_{t=1}^{b}Z^*_t,y\right\rangle_{HS}\right)&=\E^*(\widetilde{Y}^*_1)=\dfrac{1}{N}\sum_{t=1}^{N}\langle \widetilde{Y}_t,y\rangle_{HS}\nonumber\\&=\dfrac{\sqrt{b}}{N}\left[\sum_{t=1}^{n}\langle \tilde{Z}_t,y\rangle_{HS}-\sum_{i=1}^{b-1}\left(1-\dfrac{i}{b}\right)[\langle \widetilde{Z}_i,y\rangle_{HS}+\langle \widetilde{Z}_{n-i+1} ,y\rangle_{HS}]\right]\nonumber\\&
=\langle\sqrt{b}\,\hat{\tilde{\C}}_{n},y\rangle-\dfrac{\sqrt{b}}{N}\left[\sum_{i=1}^{b-1}\left(1-\dfrac{i}{b}\right)[\langle \widetilde{Z}_i,y\rangle_{HS}+\langle \widetilde{Z}_{n-i+1} ,y\rangle_{HS}]\right]\label{eq:anamenomeniW1}.
\end{align}
Therefore,
\begin{align}
\Var^*&\left(\left\langle \dfrac{1}{\sqrt{k}}\sum_{t=1}^{k}\widehat{Y}_t^*,y\right\rangle_{HS}\right)\nonumber\\&=  \E^*\left[\left\langle \dfrac{1}{\sqrt{b}}\sum_{t=1}^{b}\widehat{Z}^*_t,y\right\rangle_{HS}+\dfrac{\sqrt{b}}{N}\left[\sum_{i=1}^{b-1}\left(1-\dfrac{i}{b}\right)[\langle \widetilde{Z}_i,y\rangle_{HS}+\langle \widetilde{Z}_{n-i+1} ,y\rangle_{HS}]\right]\right]^2\nonumber\\&=
\E^*\left[\left\langle \dfrac{1}{\sqrt{b}}\sum_{t=1}^{b}\widehat{Z}^*_t,y\right\rangle_{HS}\right]^2+ \left[\dfrac{\sqrt{b}}{N}\left[\sum_{i=1}^{b-1}\left(1-\dfrac{i}{b}\right)[\langle \widetilde{Z}_i,y\rangle_{HS}+\langle \widetilde{Z}_{n-i+1} ,y\rangle_{HS}]\right]\right]^2 \nonumber\\&\qquad +2\left[\dfrac{\sqrt{b}}{N}\left[\sum_{i=1}^{b-1}\left(1-\dfrac{i}{b}\right)[\langle \widetilde{Z}_i,y\rangle_{HS}+\langle \widetilde{Z}_{n-i+1} ,y\rangle_{HS}]\right]\right]\E^*\left[\left\langle \dfrac{1}{\sqrt{b}}\sum_{t=1}^{b}\widehat{Z}^*_t,y\right\rangle_{HS}\right]\nonumber\\&= \E^*\left[\left\langle \dfrac{1}{\sqrt{b}}\sum_{t=1}^{b}\hat{Z}^*_t,y\right\rangle_{HS}\right]^2+O_P(b^3/n^2).\label{eq:diaspora}
\end{align}
Let $\widehat{Y}_t=b^{-1/2}(\widehat{Z}_t+\widehat{Z}_{t+1}+\ldots+\widehat{Z}_{t+b-1}),$ $t=1,2,\ldots,N.$ Since,
\begin{align*}
\E^*&\left[\left\langle \dfrac{1}{\sqrt{b}}\sum_{t=1}^{b}\widehat{Z}^*_t,y\right\rangle_{HS}\right]^2=
\dfrac{1}{N}\sum_{t=1}^{N}\langle \widehat{Y}_t,y\rangle_{HS}^2\\&=
\dfrac{1}{N}\sum_{t=1}^{n}\langle \widehat{Z}_t,y\rangle_{HS}\langle \widehat{Z}_t,y\rangle_{HS}\\&\;+
\sum_{i=1}^{b-1}\left(1-\dfrac{i}{b}\right)
\dfrac{1}{N}\sum_{t=1}^{n-i}[\langle \widehat{Z}_t,y\rangle_{HS}\langle \widehat{Z}_{t+i},y\rangle_{HS}+\langle \widehat{Z}_{t+i},y\rangle_{HS}\langle \widehat{Z}_t,y\rangle_{HS}]\\&
\;-\dfrac{1}{N}\sum_{i=1}^{b-1}\left(1-\dfrac{i}{b}\right)[\langle \widehat{Z}_i,y \rangle_{HS} \langle \widehat{Z}_i,y \rangle_{HS} +\langle \widehat{Z}_{n-i+1},y \rangle_{HS} \langle \widehat{Z}_{n-i+1},y \rangle]_{HS}\\&
\;-\dfrac{1}{N}\sum_{i=1}^{b-1}\sum_{j=1}^{b-t}\left(1-\dfrac{j+i}{b}\right)[\langle \hat{Z}_j,y\rangle_{HS}\langle \widehat{Z}_{j+i},y\rangle_{HS}+\langle \widehat{Z}_{n-j+1-i},y\rangle_{HS}\langle \widehat{Z}_{n-j+1},y\rangle_{HS}\\&\hspace{105pt}
+\langle \widehat{Z}_{j+i},y\rangle_{HS}\langle \widehat{Z}_{j},y\rangle_{HS}+\langle \widehat{Z}_{n-j+1},y\rangle_{HS}\langle \widehat{Z}_{n-j+1-i},y\rangle_{HS}],
\end{align*}
we get, using \eqref{eq:diaspora}, 
\begin{align*}
&\Var^*\left(\left\langle \dfrac{1}{\sqrt{k}}\sum_{t=1}^{k}\widehat{Y}_t^*,y\right\rangle_{HS}\right)\\&=
\dfrac{1}{N}\sum_{t=1}^{n}\langle \widehat{Z}_t,y\rangle_{HS}\langle \widehat{Z}_t,y\rangle_{HS}+
\sum_{i=1}^{b-1}\left(1-\dfrac{i}{b}\right)
\dfrac{1}{N}\sum_{t=1}^{n-i}[\langle \widehat{Z}_t,y\rangle_{HS}\langle \widehat{Z}_{t+i},y\rangle_{HS}+\langle \widehat{Z}_{t+i},y\rangle_{HS}\langle \widehat{Z}_t,y\rangle_{HS}]\\&\hspace{280pt}+O_P(b/n)+O_P(b^2/n)+O_P(b^3/n^2).
\end{align*}
Therefore, 
\begin{align}
&\Var^*\left(\left\langle \dfrac{1}{\sqrt{k}}\sum_{t=1}^{k}\widehat{Y}_t^*,y\right\rangle_{HS}\right)\nonumber\\& =\dfrac{1}{N}\sum_{t=1}^{n}\langle \widehat{Z}_t\otimes \widehat{Z}_t,y\otimes y\rangle_{HS}+
\sum_{i=1}^{b-1}\left(1-\dfrac{i}{b}\right)
\dfrac{1}{N}\sum_{t=1}^{n-i}[\langle \widehat{Z}_t\otimes \widehat{Z}_{t+i},y\otimes y\rangle_{HS}+\langle \widehat{Z}_{t+i}\otimes \widehat{Z}_{t},y\otimes y\rangle_{HS}]\nonumber\\&\hspace{340pt}+O_P(b^2/n).\label{eq:VarY1y}
\end{align}
Let $g_b(i)=\left(1-\frac{|i|}{b}\right)$ in Lemma~\ref{lemma:sigklisidiasporasMBB}, and use the triangular inequality to get
\begin{align*}
&\left|\left\langle\dfrac{1}{N}\sum_{t=1}^{n} \widehat{Z}_t\otimes \widehat{Z}_t+\sum_{i=1}^{b-1}\left(1-\dfrac{i}{b}\right)
\dfrac{1}{N}\sum_{t=1}^{n-i}[\widehat{Z}_t\otimes \widehat{Z}_{t+i}+\widehat{Z}_{t+i}\otimes \widehat{Z}_{t}]-\sum_{t=-\infty}^{\infty}\E[Z_0\otimes Z_t],y\otimes y\right\rangle_{HS}\right|\\&\hspace{2pt}\leq \left\|\dfrac{1}{N}\sum_{t=1}^{n}\widehat{Z}_t\otimes \widehat{Z}_t+\sum_{i=1}^{b-1}\left(1-\dfrac{i}{b}\right)
\dfrac{1}{N}\sum_{t=1}^{n-i}[ \widehat{Z}_t\otimes \widehat{Z}_{t+i}+ \widehat{Z}_{t+i}\otimes \widehat{Z}_{t}]-\sum_{t=-\infty}^{\infty}\E[Z_0\otimes Z_t]\right\|_{HS}\big\|y\otimes y\big\|_{HS}\\&=o_p(1).
\end{align*}
Therefore, and using $\langle Z_0\otimes Z_t,y\otimes y\rangle_{HS}=\langle Z_0,y\rangle_{HS}\langle Z_t,y\rangle_{HS}$, we get from~\eqref{eq:VarY1y}, as $n\to\infty,$
\begin{equation}
\Var^*\left(\left\langle \dfrac{1}{\sqrt{k}}\sum_{t=1}^{k}\widehat{Y}_t^*,y\right\rangle_{HS}\right)\overset{P}\to\left\langle\sum_{t=-\infty}^{\infty}\E[Z_0\otimes Z_t],y\otimes y\right\rangle_{HS}\label{eq:sigklisiVarw1}=\langle\Gamma_0,y\otimes y\rangle_{HS}=\sigma^2(y).
\end{equation}

We next establish the asymptotic normality stated in (i). Since  $\langle \widehat{Y}_t^*,y\rangle_{HS},\,t=1,2,\ldots,k$ are i.i.d. real valued random variables, we show  that Lindeberg's condition
is satisfied, i.e., for every $\varepsilon>0,$ as $n\to\infty,$
\begin{equation}
\label{eq:Lindeberg}
\dfrac{1}{\tau_k^{*2}}\sum_{t=1}^{k}\E^*\left[\big(\langle \widehat{Y}_t^*,y\rangle_{HS}-\E^*(\langle \widehat{Y}_t^*,y\rangle_{HS})\big)^2 \mathds{1}\big(|\langle \widehat{Y}_t^*,y\rangle_{HS}-E^*(\langle \widehat{Y}_t^*,y\rangle_{HS})|>\varepsilon \tau_k^*\big)\right]=o_p(1),
\end{equation}
where $\mathds{1}_A(x)$ denotes the indicator function of the set $A$ and
\begin{equation}\label{eq:tLindeberg}
\tau_k^{*2}=\sum_{t=1}^{k}\Var^*(\langle \widehat{Y}_t^*,y\rangle_{HS})=k\Var^*(\langle \widehat{Y}_1^*,y\rangle_{HS}).
\end{equation}
To establish~\eqref{eq:Lindeberg}, and because of~\eqref{eq:sigklisiVarw1} and~\eqref{eq:tLindeberg}, it suffices to show that, for any $\delta>0,$ as $n\to\infty,$
\begin{equation}
\label{eq:sufficeLindeberg}
P\left(\dfrac{1}{k}\sum_{t=1}^{k}\E^*\left[(\langle \widehat{Y}_t^*,y\rangle_{HS}-\E^*(\langle \widehat{Y}_t^*,y\rangle_{HS}))^2 \mathds{1}(|\langle \widehat{Y}_t^*,y\rangle_{HS}-E^*(\langle \widehat{Y}_t^*,y\rangle_{HS})|>\varepsilon \tau_k^*)\right]>\delta\right)\to0.
\end{equation}
Towards this, notice first that, for any two random variables $X$ and $Y$ and any $\eta>0,$
\begin{align}
\E[|X+Y|^2&\mathds{1}(|X+Y|>\eta)]\nonumber\\&\leq
4\left[\E|X|^2\mathds{1}(|X|>\eta/2)+\E|Y|^2\mathds{1}(|Y|>\eta/2)\right];\label{eq:Lahiri}
\end{align}
see Lahiri $(2003)$, p. $56$. Since the random variables $\langle \widehat{Y}_t^*,y\rangle_{HS}$ are i.i.d., we get using expression~\eqref{eq:anamenomeniW1} and Markov's inequality that, as $n\to\infty,$
\begin{align}
&P\left(\dfrac{1}{k}\sum_{t=1}^{k}\E^*\left[(\langle \widehat{Y}_t^*,y\rangle_{HS}-\E^*(\langle \widehat{Y}_t^*,y\rangle_{HS}))^2 \mathds{1}(|\langle \widehat{Y}_t^*,y\rangle_{HS}-\E^*(\langle \widehat{Y}_t^*,y\rangle_{HS})|> \varepsilon \tau_k^*)\right]>\delta\right) \nonumber \\&
\leq\delta^{-1}\E\left\{\E^*\left[(\langle \widehat{Y}_1^*,y\rangle_{HS}-\E^*(\langle \widehat{Y}_1^*,y\rangle_{HS}))^2 \mathds{1}(|\langle \widehat{Y}_1^*,y\rangle_{HS}-\E^*(\langle \widehat{Y}_1^*,y\rangle_{HS})|> \varepsilon \tau_k^*)\right]\right\}\nonumber \\&
=\delta^{-1}\E\Bigg\{\E^*\Bigg[\left(\left\langle \dfrac{1}{\sqrt{b}}\sum_{t=1}^{b}\widehat{Z}^*_t,y\right\rangle_{HS}+\dfrac{\sqrt{b}}{N}\left[\sum_{i=1}^{b-1}\left(1-\dfrac{i}{b}\right)[\langle \widetilde{Z}_i,y\rangle_{HS}+\langle \widetilde{Z}_{n-i+1} ,y\rangle_{HS}]\right]\right)^2  \nonumber \\&\hspace{35pt}\times\mathds{1}\left(\left|\left\langle \dfrac{1}{\sqrt{b}}\sum_{t=1}^{b}\widehat{Z}^*_t,y\right\rangle_{HS}+\dfrac{\sqrt{b}}{N}\left[\sum_{i=1}^{b-1}\left(1-\dfrac{i}{b}\right) [\langle \widetilde{Z}_i,y\rangle_{HS}+\langle \widetilde{Z}_{n-i+1} ,y\rangle_{HS}]\right]\right|> \varepsilon \tau_k^*\right)\Bigg]\Bigg\}\nonumber \\&
= \delta^{-1}\E\Bigg[\dfrac{1}{N}\sum_{t=1}^{N}\left(\langle \widehat{Y}_t,y\rangle_{HS}+\dfrac{\sqrt{b}}{N}\left[\sum_{i=1}^{b-1}\left(1-\dfrac{i}{b}\right)[\langle \widetilde{Z}_i,y\rangle_{HS}+\langle \widetilde{Z}_{n-i+1} ,y\rangle_{HS}]\right]\right)^2 \nonumber \\&\hspace{35pt}\times
\mathds{1}\left(\left|\langle \widehat{Y}_t,y\rangle_{HS}+\dfrac{\sqrt{b}}{N}\left[\sum_{i=1}^{b-1}\left(1-\dfrac{i}{b}\right)[\langle \widetilde{Z}_i,y\rangle_{HS}+\langle \widetilde{Z}_{n-i+1} ,y\rangle_{HS}]\right]\right|> \varepsilon \tau_k^*\right)\Bigg]
\nonumber \\&
\leq 4\delta^{-1}\Bigg[\E(\langle \widehat{Y}_1,y\rangle_{HS}^2)\mathds{1}(|\langle \widehat{Y}_1,y\rangle_{HS}|> \varepsilon \tau_k^*/2)+\E\left(\dfrac{\sqrt{b}}{N}\sum_{i=1}^{b-1}\left(1-\dfrac{i}{b}\right)[\langle \widetilde{Z}_i,y\rangle_{HS}+\langle \widetilde{Z}_{n-i+1} ,y\rangle_{HS}]\right)^2\nonumber \\&\hspace{130pt}\times\mathds{1}(\left|\left(\dfrac{\sqrt{b}}{N}\sum_{i=1}^{b-1}\left(1-\dfrac{i}{b}\right)[\langle \widetilde{Z}_i,y\rangle_{HS}+\langle \widetilde{Z}_{n-i+1} ,y\rangle_{HS}]\right)\right|> \varepsilon \tau_k^*/2)\Bigg]\nonumber\\&
\leq 4\delta^{-1}\Bigg[\E(\langle \widehat{Y}_1,y\rangle_{HS}^2)\mathds{1}(|\langle \widehat{Y}_1,y\rangle_{HS}|> \varepsilon \tau_k^*/2)+\E\left(\dfrac{\sqrt{b}}{N}\sum_{i=1}^{b-1}\left(1-\dfrac{i}{b}\right)[\langle \widetilde{Z}_i,y\rangle_{HS}+\langle \widetilde{Z}_{n-i+1} ,y\rangle_{HS}]\right)^2\Bigg]\nonumber\\&
\leq 4\delta^{-1}\E(\langle \widehat{Y}_1,y\rangle_{HS}^2)\mathds{1}(|\langle \widehat{Y}_1,y\rangle_{HS}|> \varepsilon \tau_k^*/2)+O(b^3/n^2).\label{eq:voithitikiLinder}
\end{align}
By Lemma~$4$ of Kokoszka and Reimherr (2013) it follows  that $\sum_{s=-\infty}^{\infty}\E\langle Z_0,y\rangle_{HS}\langle Z_s,y\rangle_{HS}$ converges absolutely. By Kronecker's lemma, we then get, as $n\to\infty,$
\begin{align*}
\E(\langle \widehat{Y}_1,y\rangle_{HS}^2)&=\dfrac{1}{b}\sum_{i=1}^b\sum_{j=1}^b\E[\langle \widehat{Z}_i,y \rangle_{HS}\langle \widehat{Z}_j,y \rangle_{HS}]\\&=\sum_{|s|<b}\left(1-\dfrac{|s|}{b}\right)\E[\langle \widehat{Z}_0,y \rangle_{HS}\langle \widehat{Z}_s,y\rangle_{HS}]\\& =\sum_{|s|<b}\left(1-\dfrac{|s|}{b}\right)\E[\langle Z_0,y \rangle_{HS}\langle Z_s,y \rangle_{HS}]+O(b/n^{1/2}) \\&\to\sum_{s=-\infty}^{\infty}\E[\langle Z_0,y\rangle_{HS}\langle Z_s,y\rangle_{HS}].
\end{align*}
Therefore, by the dominated convergence theorem, 
\begin{equation}
\E[\langle \widehat{Y}_1,y\rangle_{HS}^2) \mathds{1}(|\langle \widehat{Y}_1,y\rangle_{HS}|> \varepsilon \tau_k^*/2)=o(1) \label{eq:dominated1}
\end{equation}
and, 
therefore, assertion (i) is proved. 

To establish assertion \ref{condition2}, notice first that 
$$
\E^*\left\|\dfrac{1}{\sqrt{k}}\sum_{t=1}^{k}\widehat{Y}^*_t\right\|_{HS}^2=\E^*\|\widehat{Y}^*_1\|_{HS}^2.
$$
Furthermore, since
\begin{align*}
\E^*\left(\dfrac{1}{\sqrt{b}}\sum_{t=1}^{b}Z_t^*\right)=\dfrac{1}{N}\sum_{t=1}^{N} \widetilde{Y}_t&=\dfrac{\sqrt{b}}{N}\left[\sum_{t=1}^{n}\widetilde{Z}_t-\sum_{i=1}^{b-1}\left(1-\dfrac{i}{b}\right) [\widetilde{Z}_i+\widetilde{Z}_{n-i+1}]\right]\\&
=\sqrt{b}\widehat{\widetilde{C}}_{n}-\dfrac{\sqrt{b}}{N}\sum_{i=1}^{b-1}\left(1-\dfrac{i}{b}\right)[\widetilde{Z}_i+\widetilde{Z}_{n-i+1}],
\end{align*}
we get
\begin{align*}
\E^*\|\widehat{Y}^*_1\|_{HS}^2&=\E^*\left\|\dfrac{1}{\sqrt{b}}\sum_{t=1}^{b}\widehat{Z}^*_t+\dfrac{\sqrt{b}}{N}\sum_{i=1}^{b-1}\left(1-\dfrac{i}{b}\right)[\widetilde{Z}_i+\widetilde{Z}_{n-i+1} ]\right\|_{HS}^2\\&=\dfrac{1}{N}\sum_{t=1}^{N}\left\|\widehat{Y}_t+\dfrac{\sqrt{b}}{N}\sum_{i=1}^{b-1}\left(1-\dfrac{i}{b}\right)[\widetilde{Z}_i+\widetilde{Z}_{n-i+1} ]\right\|_{HS}^2.
\end{align*}
Since, $\sqrt{b}N^{-1}\sum_{i=1}^{b-1}\left(1-\dfrac{i}{b}\right)[\widetilde{Z}_i+\widetilde{Z}_{n-i+1}]=O_P(b^{3/2}/n),$ it suffices to prove that the limit 
\begin{equation}
\lim_{n\to\infty}\dfrac{1}{N}\sum_{t=1}^{N}\|\widehat{Y}_t\|_{HS}^2
\label{eq:condition2i}
\end{equation}
exists and it is finite. Let $Y_t=b^{-1/2}(Z_t+\dots+Z_{t+b-1}),\,t=1,2,\ldots N,$ and note that
$
N^{-1}\sum_{t=1}^{N}\|\widehat{Y}_t\|_{HS}^2=N^{-1}\sum_{t=1}^{N}\|Y_t+\sqrt{b}(\C_0-\widehat{\widetilde{\C}}_{0})\|_{HS}^2
$. 
By Theorem~3 of Kokoszka and Reimherr (2013), in order to prove~\eqref{eq:condition2i}, it suffices to show that
\begin{equation}
\lim_{n\to\infty}\dfrac{1}{N}\sum_{t=1}^{N}\|Y_t\|_{HS}^2
\label{eq:condition2ii}
\end{equation}
 exists and it is finite. We have that
\begin{align}
\dfrac{1}{N}\sum_{t=1}^{N}\|Y_t\|_{HS}^2&= \dfrac{1}{N}\langle Z_t,Z_t\rangle_{HS}+\sum_{i=1}^{b-1}\left(1-\dfrac{i}{b}\right)\dfrac{1}{N}\sum_{t=1}^{n-i}[\langle Z_t,Z_{t+i}\rangle_{HS}+\langle Z_{t+i},Z_t\rangle_{HS}]\nonumber\\&\hspace{20pt}
-\dfrac{1}{N}\sum_{t=1}^{b-1}\left(1-\dfrac{t}{b}\right)[\langle Z_t,Z_t \rangle_{HS} +\langle X_{n-t+1},X_{n-t+1}\rangle]_{HS}\nonumber\\&\hspace{20pt}
-\dfrac{1}{N}\sum_{t=1}^{b-1}\sum_{j=1}^{b-t}\left(1-\dfrac{t+j}{b}\right)
[\langle Z_j,Z_{j+t}\rangle_{HS}+\langle Z_{n-j+1-t},Z_{n-j+1}\rangle_{HS}\nonumber\\&\hspace{145pt}+
\langle Z_{j+t},Z_{j}\rangle_{HS}+\langle Z_{n-j+1},Z_{n-j+1-t}\rangle_{HS}]\nonumber\\&
=\dfrac{1}{N}\sum_{t=1}^{n}\langle Z_t,Z_t\rangle_{HS}+ \sum_{i=1}^{b-1}\left(1-\dfrac{i}{b}\right)\dfrac{1}{N}\sum_{t=1}^{n-i}[\langle Z_t,Z_{t+i}\rangle_{HS}+\langle Z_{t+i},Z_t\rangle_{HS}]+O_P(b^2/n)\nonumber\\&= \sum_{i=-b+1}^{b-1}\left(1-\dfrac{i}{b}\right)\dfrac{1}{n}\sum_{t=1}^{n-|i|}\iint Z_t(u,v)Z_{t+|i|}(u,v)\mathrm{d}u\mathrm{d}v+O_P(b^2/n).
\end{align}
Hence, by letting $g_b(s)=\left(1-|s|/b \right)$ in Lemma~\ref{lemma:MBBtight}, we get that the last term above converges to $\sum_{s=-\infty}^{\infty}\E \iint Z_0(u,v)Z_s(u,v)\mathrm{d}u\mathrm{d}v, $ from which we conclude that,  as $n\to\infty$,
\begin{equation*}
\E^*\|Y^*_1\|_{HS}^2\to\sum_{s=-\infty}^{\infty}\E \iint Z_0(u,v)Z_s(u,v)\mathrm{d}u\mathrm{d}v,
\end{equation*}
in probability.
\end{proof}
\medskip

\noindent
\begin{proof}[Proof of Lemma~\ref{thm:katanomiTmH0}]
Using Theorem~$3$ of Kokoszka and Reimherr (2013) it follows that there exist two independent, mean zero, Gaussian Hilbert Schmidt operators $\mathcal{Z}_{1,0}$ and $\mathcal{Z}_{2,0}$ with covariance operators $\Gamma_{1,0}$ and $\Gamma_{2,0}$ respectively, such that  $$\left(\sqrt{n_1}(\widehat{\C}_{1,0}-\C_{1,0}),\sqrt{n_2}(\widehat{\C}_{2,0}-\C_{2,0})\right)$$
converges weakly to $(\mathcal{Z}_{1,0},\mathcal{Z}_{2,0}).$ Since
$$
\sqrt{\dfrac{n_1n_2}{M}}(\widehat{\C}_{1,0}-\widehat{\C}_{2,0})=
\sqrt{\dfrac{n_2}{M}}\sqrt{n_1}(\widehat{\C}_{1,0}-\widetilde{\C}_0)-\sqrt{\dfrac{n_1}{M}}\sqrt{n_2}(\widehat{\C}_{2,0}-\widetilde{\C}_0),
$$
where $\widetilde{\C}_0$ is the (under $H_0$)  common lag-zero covariance operator of the two populations, we get that, for $n_1,n_2\to\infty$ and $n_1/M\to\theta,$
$$T_M\overset{d}\to\|\mathcal{Z}_0\|^2_{HS},$$
 where $\mathcal{Z}_0=\sqrt{1-\theta}\mathcal{Z}_{1,0}-\sqrt{\theta}\mathcal{Z}_{2,0}.$
\end{proof}


\medskip
\noindent
\begin{proof}[Proof of Theorem~\ref{thm:consistentMBB}]
Using the triangle inequality and the fact that $ \sqrt{n}(\widehat{\C}_{i,0}-\C_{i,0}) \Rightarrow \mathcal{Z}_{i,0}$,  $i=1,2$, 
 it suffices to prove that $T_M^*$ converges weakly to $\|\mathcal{Z}_0\|^2_{HS}$, where $\mathcal{Z}_0=\sqrt{1-\theta}\mathcal{Z}_{1,0}-\sqrt{\theta}\mathcal{Z}_{2,0}.$ This is proved along the same lines as Lemma~\ref{thm:katanomiTmH0}  using 
of Theorem~\ref{thm:CLTmbb} and the independence of the pseudo-random elements $ \overline{\mathcal Y}_{1,n_1}^*$ and $ \overline{\mathcal Y}_{2,n_2}^*$.
\end{proof}

\section*{Acknowledgements}
The authors would like to thank the reviewers for helpful comments and suggestions.

\section*{Data Availability Statement}
The data that support the findings of this study are available on request from the corresponding author. The data are not publicly available due to privacy or ethical restrictions.

\section*{Supplementary Material}
The supplementary material contains the proofs of Lemma~\ref{lemma:sigklisidiasporasMBB} and Lemma~\ref{lemma:MBBtight}.


\end{document}